\theoremstyle{plain}
\newtheorem{theorem}{Theorem}[section]
\newtheorem{proposition}[theorem]{Proposition}
\newtheorem{corollary}[theorem]{Corollary}
\theoremstyle{definition}
\newtheorem{definition}[theorem]{Definition}
\newtheorem{remark}[theorem]{Remark}
\newtheorem{example}[theorem]{Example}
\begin{document}

\title{Generalized Lie Algebroids - Examples by Distinguished Lie Algebroids
with Applications to Optimal Control}
\author{Constantin M. ARCUS, Esmaeil PEYGHAN, Esa SHARAHI}
\date{}
\maketitle

\begin{abstract}
We will prove that the generalized Lie algebroid is a distinguished example
by Lie algebroid. The generality of it with respect to the Lie algebroid is
similar with the generality of the pull-back vector bundle with respect to
the vector bundle. Next, we will prove that the proof of Theorem 3.1 from 
\cite{19} is a misconception and the mentioned Theorem has no validity.
Finally, we anatomize an optimal control problem solvable in the generalized
Lie algebroid framework whereas Lie algebroid instrumentation can not solve
it.

\medskip\noindent \textbf{Keywords:} (Pre)algebra, generalized (almost) Lie
algebra, generalized (almost) Lie algebroid, skew-algebroid, optimal control
problem.

\smallskip\noindent \textbf{MSC2010:} 00A69, 58A32, 58B34, 53B50.
\end{abstract}

%
%==================================================
%%%%%%%%%%%%%%%%%%%%%%%%%%%%%

\section{Introduction}

Important applications of Lie algebras in physics and mechanics (see \cite%
{SW}) inspired many authors to study these spaces and to generalize them to
other spaces such as Lie superalgebras \cite{K0}, affine (Kac-Moody) Lie
algebras \cite{K}, quasisimple Lie algebras \cite{HT}, (locally) extended
affine Lie algebras \cite{AABGP, MY, N} and invariant affine reflection
algebras \cite{N1}.

A natural generalization of the usual Lie algebra over the field $\mathbb{K}$
were introduced by Palais \cite{P} and Rinehart \cite{R}. It is called 
\textit{Lie d-ring} or \textit{Lie-Rinehart algebra} (see \cite{H, H1, H2}
for more detailes). For an easy access, see the following definition of
Lie-Rinehart algebra.

\begin{definition}
A Lie-Rinehart algebra over a field $\mathbb{K}$ is a triple $\left( \left(
A,\left[ ,\right] _{A}\right) ,\left( \mathcal{F},\cdot \right) ,\rho
\right) $ such that

$LR1.$ $\left( A,\left[ ,\right] _{A}\right) $ is a Lie algebra over $%
\mathbb{K}$;

$LR2.$ $\left( \mathcal{F},\cdot \right) $ is a commutative and unitary
algebra over $\mathbb{K}$;

$LR3.$ $A$ is a module over $\mathcal{F}$;$\ $

$LR4.$ The modules morphism $\rho $ from $A$ to $Der\left( \mathcal{F}%
\right) $ (see Definition 2.4)$,$ called anchor map, is an algebras morphism
and satisfy the compatibility condition\qquad 
\begin{equation*}
\left[ u,f\cdot v\right] _{A}=f\cdot \left[ u,v\right] _{A}+\rho \left(
u\right) \left( f\right) \cdot v,
\end{equation*}%
for any $u,v\in A$ and $f\in \mathcal{F}$.\ 
\end{definition}

The study of Lie algebroids were considerably improved by J. Pradines in 
\cite{[40]}. He noticed that the Lie algebroids are infinitesimal versions
of Lie groupoids in a functorial manner.

\begin{remark}
Using the general framework of Lie-Rinehart algebras, a Lie algebroid can be
regarded as a triple $((F,\nu ,N),[,]_{F},(\rho ,Id_{N})),$ where $\left(
F,\nu ,N\right) $ is a vector bundle, $\left[ ,\right] _{F}$ is an operation
on the module of sections $\Gamma \left( F,\nu ,N\right) $ and $\left( \rho
,Id_{N}\right) $ is a vector bundles morphism from $\left( F,\nu ,N\right) $
to $\left( TN,\tau _{N},N\right) $ such that the triple 
\begin{equation*}
(\left( \Gamma (F,\nu ,N),[,]_{F}\right) ,\left( \mathcal{F}\left( N\right)
,\cdot \right) ,\Gamma (\rho ,Id_{N})),
\end{equation*}%
is a Lie-Rinehart algebra over $\mathbb{R},$ where $\Gamma (\rho ,Id_{N})$
is the modules morphism associated to the vector bundles morphism $(\rho
,Id_{N})$ (for a detailed illustration, see Definition \ref{Arcus1}).
\end{remark}

%Using an arbitrary Lie algebroid $((F,\nu ,N),[,]_{F},(\rho ,Id_{N}))$, Alan
%Weinstein proposed in \cite{Weinstein} a developement of a Lagrangian
%formalism directly on a Lie algebroid. This is similar to Klein's formalism
%for ordinary Lagrangian Mechanics \cite{Klein}. Alan Weinstein gave a theory
%of Lagrangian systems on Lie algebroids and obtained the Euler-Lagrange
%equations using the dual of a Lie algebroid and the Legendre transformation
%defined by a regular Lagrangian. P. Liberman in \cite{Libermann} showed
%later that such a formalism is not possible if one considers the tangent
%bundle of a Lie algebroid as a space for developing the theory. E. Mart\'{\i}%
%nez in \cite{Martines 1} gave a full description using the prolongation of a
%Lie algebroid presented by K. Mackenzie and P. J. Higgins in \cite{Higins}.
%Also, in the papers \cite{Cortes 1, Cortes 2} and \cite{LP4,LP5} were
%developed the geometry of prolongation Lie algebroid of a Lie algebroid $%
%((F,\nu ,N),[,]_{F},(\rho ,Id_{N})).$

A first generalization of the Lie algebroid is the skew-algebroid. It was
introduced by J. Grabowski and P. Urba\'{n}ski in \cite{[17], [18]} and it
was used in analytical mechanics \cite{[10], [12]}. Also, we remark that the
interests for skew algebroids is given by the natural geometric framework
offered for the nonholonomic mechanics (see \cite{[14]}). For more
applications of skew-algebroids, see e.g. \cite{[7]}.

A possible generalization of the Lie algebroid was introduced recently in
literature by Arcu\c{s} in \cite{2} and is called generalized Lie algebroid
by the following

\begin{definition}
A generalized Lie algebroid is a triple $\left( (F,\nu ,N),[,]_{F,h},\left(
\rho ,\eta \right) \right) $ given by the diagrams 
\begin{equation*}
\begin{array}{c}
\begin{array}[b]{ccccc}
(F,[,]_{F,h}) & ^{\underrightarrow{~\ \ \ \rho \ \ \ \ }} & (TM,[,]_{TM}) & 
^{\underrightarrow{~\ \ \ Th\ \ \ \ }} & (TN,[,]_{TN}) \\ 
~\downarrow \nu &  & ~\ \ \downarrow \tau _{M} &  & ~\ \ \ \tau
_{N}\downarrow \ \ \ \ \ \ \  \\ 
N & ^{\underrightarrow{~\ \ \ \eta ~\ \ }} & M & ^{\underrightarrow{~\ \ \
h~\ \ }} & N%
\end{array}%
\end{array}%
,
\end{equation*}%
where $h$ and $\eta $ are arbitrary diffeomorphisms, $(\rho ,\eta )$ is a
vector bundles morphism from $(F,\nu ,N)$ to $(TM,\tau _{M},M)$ and the
operation 
\begin{equation*}
\begin{array}{ccc}
\Gamma (F,\nu ,N)\times \Gamma (F,\nu ,N) & ^{\underrightarrow{~\ \
[,]_{F,h}~\ \ }} & \Gamma (F,\nu ,N) \\ 
(u,v) & \longmapsto & \ [u,v]_{F,h}%
\end{array}%
,
\end{equation*}%
fulfills 
\begin{equation*}
\lbrack u,f\cdot v]_{F,h}=f\cdot \lbrack u,v]_{F,h}+\left( \Gamma (Th\circ
\rho ,h\circ \eta )(u)\right) \left( f\right) \cdot v,
\end{equation*}%
such that the couple $(\Gamma (F,\nu ,N),[,]_{F,h})$ is a Lie algebra over $%
\mathcal{F}(N).$ The anchor map $\Gamma (Th\circ \rho ,h\circ \eta )$ is
given by the equality%
\begin{equation*}
\left( \Gamma (Th\circ \rho ,h\circ \eta )(u^{\alpha }t_{\alpha })\right)
\left( f\right) =u^{\alpha }\rho _{\alpha }^{i}\cdot \left( \frac{\partial
\left( f\circ h\right) }{\partial x^{i}}\circ h^{-1}\right) ,
\end{equation*}%
for any $u^{\alpha }t_{\alpha }\in \Gamma (F,\nu ,N)$ and $f\in \mathcal{F}%
\left( N\right) $.
\end{definition}

%In \textit{Theorem 4.2} we prove that the conclusion of \textit{Theorem 3.1}
%of the paper \cite{19} is false.
First of all, studying the definition of generalized Lie algebroid, we
remark that the operation $[,]_{F,h}$\ is not $\mathcal{F}(N)$-bilinear and
so, the couple $(\Gamma (F,\nu ,N),[,]_{F,h})$\ can not be regarded as a Lie
algebra over $\mathcal{F}(N)$\ in the usual sense. In \cite{7} a new term,
prealgebra, was introduced in order to extend the notion of algebra.
Basically information about prealgebras are presented in the second part of
Section 2 of this paper. Using the notion of (almost) Lie (pre)algebra, we
obtain a new extension of the usual Lie algebra, called generalized (almost)
Lie algebra.

\begin{definition}
A generalized (almost) Lie algebra over unitary and commutative ring $%
\mathcal{F}$ is a triple $(A,[,]_{A},\left( \rho ,\rho _{0}\right) )$
satisfying

$1.$ $(A,[,]_{A})$ is a (almost) Lie prealgebra over $\mathcal{F}$;

$2.$ $\left( \rho ,\rho _{0}\right) $ is a modules morphism from $\left(
A,+,\cdot \right) $ to $\left( Der(\mathcal{F}),+,\cdot \right) $ (called
anchor map) such that $\rho _{0}$ is invertible and satisfies the
compatibility condition 
\begin{equation*}
\lbrack u,f\cdot v]_{A}=f\cdot \lbrack u,v]_{A}+\rho _{0}^{-1}\left( \rho
(u)(\rho _{0}\left( f\right) )\right) \cdot v,
\end{equation*}%
for any $u,v\in A$ and $f\in \mathcal{F}$.

The generalized (almost) Lie algebra $(A,[,]_{A},\left( \rho ,Id_{\mathcal{F}%
}\right) )$ will be denoted $(A,[,]_{A},\rho ).$
\end{definition}

More information about generalized (almost) Lie algebras is presented in
Section 3 (see also \cite{7,8}). In \textit{Example 3.8 }we prove that the
class of generalized (almost) Lie algebras is larger than the class of
Lie-Rinehart algebras. So, the affirmation \textit{"...... the author's
\textquotedblleft generalized Lie algebra\textquotedblright\ is actually a
particular case of a Lie pseudoalgebra (Lie-Rinehart algebra) mentioned in
the introduction."}\textbf{\ }from \cite{19}, pag. 5 is completely false.

\begin{remark}
A skew-algebroid/Lie algebroid can be regarded as a triple $\left( (F,\nu
,N),[,]_{F},\left( \rho ,Id_{N}\right) \right) $ given by the diagrams 
\begin{equation*}
\begin{array}{c}
\begin{array}[b]{ccc}
(F,[,]_{F}) & ^{\underrightarrow{~\ \ \ \rho \ \ \ \ }} & (TN,[,]_{TN}) \\ 
~\downarrow \nu &  & ~\ \ \ \tau _{N}\downarrow \ \ \ \ \ \ \  \\ 
N & ^{\underrightarrow{~\ \ \ Id_{N}~\ \ }} & N%
\end{array}%
\end{array}%
,
\end{equation*}%
where $(\rho ,Id_{N})$ is a vector bundles morphism from $(F,\nu ,N)$ to $%
(TN,\tau _{N},N)$ and $[,]_{F}$ is an operation on $\Gamma (F,\nu ,N)$ such
that the triple 
\begin{equation*}
\left( \Gamma \left( F,\nu ,N\right) ,\left[ ,\right] _{F},\Gamma (\rho
,Id_{N})\right),
\end{equation*}%
is a generalized almost Lie algebra/generalized Lie algebra over $\mathcal{F}%
(N)$.
\end{remark}

Secondly, studying the definition of generalized Lie algebroid, we remark
that the anchor map $\Gamma \left( Th\circ \rho ,h\circ \eta \right) $ is
only a notation, because we can not discuss about the vector bundles
morphism $\left( Th\circ \rho ,h\circ \eta \right) .$ We can discuss only
about the composition vector bundles morphism $\left( Th,h\right) \circ
\left( \rho ,\eta \right) .$

In the Theorem 3.11, we prove that the generalized Lie algebroid is an
example by distinguished Lie algebroid. The generality of it with respect to
the Lie algebroid is similar with the generality of the pull-back vector
bundle with respect to the vector bundle. Every vector bundle can be
regarded as the pull-back of it through identity. Similar, every Lie
algebroid can be regarded as a particular generalized Lie algebroid such
that $\eta =Id_{N}=h$. This was the motivation for the name of generalized
Lie algebroid.

Section 4, is devoted to show that the proof of Theorem 3.1 (as the only
result) from \cite{19} is a misconception and so why the authors of \cite{19}
have wrong ratiocination.

Using the Euclidean $3$-dimensional manifold $\Sigma $ with the
differentiable structure given by the differentiable atlass $\left\{ \left(
\Sigma ,\varphi _{\Sigma }\right) \right\} ,$ where 
\begin{equation*}
\begin{array}{ccc}
\Sigma & ^{\underrightarrow{~\ \ \ \varphi _{\Sigma }~\ \ }} & \mathbb{R}^{3}
\\ 
x & \longmapsto & \left( x^{1},x^{2},x^{3}\right)%
\end{array}%
,
\end{equation*}%
in Section 5, we put the optimal control problem of finding the curve $\left[
0,T\right] ~^{\underrightarrow{~\ \ \ c~\ \ }}~\Sigma $ given by 
\begin{equation*}
\left( \varphi _{\Sigma }\circ c\right) \left( t\right) =\left( x^{1}\left(
t\right) ,x^{2}\left( t\right) ,x^{3}\left( t\right) \right)
\end{equation*}%
and the sections $u=y^{i}\frac{\partial }{\partial x^{i}}\in \Gamma \left(
T\Sigma _{\mid \text{Im}c},\tau _{\Sigma },\text{Im}c\right) $ which verify
the control system%
\begin{equation}
\begin{array}{c}
\frac{dx^{1}}{dt}=-x^{2}y^{2}+y^{3}, \\ 
\frac{dx^{2}}{dt}=-x^{1}y^{1}-x^{2}y^{2}+y^{3}, \\ 
\frac{dx^{3}}{dt}=y^{1},%
\end{array}
\label{OP}
\end{equation}%
and which are solutions of an ODE by Lagrange type, where $L$ is the
Lagrange fundamental function given by 
\begin{equation*}
L\left( x,y\right) =\frac{1}{2}\left[ \left( y^{1}\right) ^{2}+\left(
y^{2}\right) ^{2}+\left( y^{3}\right) ^{2}\right] .
\end{equation*}

We know that, using the dual of a Lie algebroid and the Legendre
transformation defined by a regular Lagrangian, Alan Weinstein gave a theory
of Lagrangian systems on Lie algebroids and obtained the Euler-Lagrange
equations. Similar to Klein's formalism for ordinary Lagrangian Mechanics 
\cite{Klein}, Alan Weinstein proposed in \cite{Weinstein} a developement of
a Lagrangian formalism directly on a Lie algebroid.

P. Liberman showed in \cite{Libermann} that it is not possible to develope
this formalism, if one considers the tangent bundle of a Lie algebroid as a
space for developing the theory. E. Mart\'{\i}nez in \cite{Martines 1} gave
a full description of a Lagrangian formalism using the prolongation of a Lie
algebroid presented by K. Mackenzie and P. J. Higgins in \cite{Higins}.

In the paper \cite{3}, C. M. Arcu\c{s} presented a Lagrangian formalism
using the commutative diagrams 
\begin{equation*}
\begin{array}{ccccccc}
F & ^{\underrightarrow{~\ \ g~\ \ }} & (F,[,]_{F,h}) & ^{\underrightarrow{~\
\ \rho ~\ \ }} & TN & ^{\underrightarrow{~\ \ Th~\ \ }} & TN \\ 
~\ \downarrow \nu &  & ~\ \downarrow \nu &  & ~\ \ \downarrow \tau _{N} &  & 
~\ \ \downarrow \tau _{N} \\ 
N & ^{\underrightarrow{~\ \ h~\ \ }} & N & ^{\underrightarrow{~\ \ \eta ~\ \ 
}} & N & ^{\underrightarrow{~\ \ h~\ \ }} & N%
\end{array}%
,
\end{equation*}%
where $\left( g,h\right) $ is a locally invertible vector bundles morphism.

After some calculus, in the end of Section 5, we pass the diagrams 
\begin{equation*}
\begin{array}{ccccccccc}
&  & T\Sigma & ^{\underrightarrow{~\ \ g~\ \ }} & (F,[,]_{F,s_{O}}) & ^{%
\underrightarrow{~\ \ \rho ~\ \ }} & T\Sigma & ^{\underrightarrow{~\ \
Ts_{O}~\ \ }} & T\Sigma \\ 
& \dot{c}\nearrow & ~\ \downarrow \tau _{\Sigma } &  & \downarrow \tau
_{\Sigma } &  & ~\ \ \downarrow \tau _{\Sigma } &  & ~\ \ \downarrow \tau
_{\Sigma } \\ 
I & ^{\underrightarrow{~\ \ c~\ \ }} & \Sigma & ^{\underrightarrow{~\ \
s_{O}~\ \ }} & \Sigma & ^{\underrightarrow{~\ \ Id_{\Sigma }~\ \ }} & \Sigma
& ^{\underrightarrow{~\ \ s_{O}~\ \ }} & \Sigma%
\end{array}%
,
\end{equation*}%
where the vector bundle $\left( T\Sigma ,\tau _{\Sigma },\Sigma \right) $ is
anchored by the generalized Lie algebroid $\left( (F,\nu
,N),[,]_{F,s_{O}},\left( \rho ,Id_{\Sigma }\right) \right) $ with the help
of a left invertible vector bundles morphism $\left( g,s_{O}\right) .$

It is important to remark that our optimal control problem can not be solve
with the help of the previous theories of Lagrangian formalism for
(generalized) Lie algebroids, because $\dim \Gamma \left( T\Sigma ,\tau
_{\Sigma },\Sigma \right) =3\neq 2=\dim \Gamma \left( F,\tau _{\Sigma
},\Sigma \right) $. So, the introduction of generalized Lie algebroids is
motivated by the usual problems from optimal control theory

\section{Preliminaries}

\ \ \ \ \ 

In this section, we present basic notions about modules. All the examples
are from the geometry of vector bundles and all the vector bundles that we
used have paracompact basis. Also, we remark that if $\left( A,+\right) $ is
a commutative group, then $(End(A),+,\circ )$ is an unitary ring.\newline

\begin{definition}
If $\left( \mathcal{F},+,\cdot \right) $ is an unitary ring and there exists
an unitary rings morphism 
\begin{equation*}
\begin{array}{ccc}
\mathcal{F} & ^{\underrightarrow{~\ \ \phi ~\ \ }} & End\left( A\right) \\ 
f & \longmapsto & \phi _{f}%
\end{array}%
,
\end{equation*}%
then we will say that $\mathcal{F}$ acts on $A$ with the help of
representations $\phi _{f},~f\in \mathcal{F}$ and the triple $\left(
A,+,\phi \right) $\emph{\ }will be called\emph{\ }module\ over\emph{\ }$%
\mathcal{F}.$\emph{\ }In addition, if the application $\phi $ is injective,
then $\left( A,+,\phi \right) $ will be called faithful module over $%
\mathcal{F}.$ In particular, if $\phi _{f}\left( u\right) \overset{put}{=}%
f\cdot u,$ for any $f\in \mathcal{F}$ and $u\in A,$ then we will say that $%
\left( A,+,\cdot \right) $\emph{\ }or $A$ is a module\ over\emph{\ }$%
\mathcal{F}$.
\end{definition}

\begin{example}
If $\left( F,\nu ,N\right) $ is a vector bundle$,$ then the set of sections $%
\Gamma \left( F,\nu ,N\right) $ can be regarded as a faithful module over $%
\mathcal{F}\left( N\right) $ with respect to the usual action "$\cdot $"$.$
\end{example}

\begin{example}
If $\left( F,\nu ,N\right) $ and $\left( E,\pi ,M\right) $ are two vector
bundles and $\varphi _{0}\in \mathbf{Man}\left( N,M\right) $, then $\left(
\Gamma \left( F,\nu ,N\right) ,+,\odot \right) $ is a module over $\mathcal{F%
}\left( M\right) ,$ where $\odot $ is the action given by 
\begin{equation*}
g\odot z=\varphi _{0}^{\ast }\left( g\right) \cdot z,
\end{equation*}%
for any $g\in \mathcal{F}\left( M\right) $ and $z\in \Gamma \left( F,\nu
,N\right) .$ We denoted by $\mathbf{Man}$ the category of manifolds.
\end{example}

\begin{definition}
If $\left( A,+,\phi \right) $ is a module over unitary ring $\mathcal{F}$
and $\left( B,\boxplus ,\psi \right) $ is a module\emph{\ }over unitary ring 
$\mathcal{G}$, then we define the morphisms set with the source $\left(
A,+,\phi \right) $ and the target $\left( B,\boxplus ,\psi \right) $ by 
\begin{equation*}
\left\{ \left( \alpha ,\alpha _{0}\right) \in Hom\left( A,B\right) \times
Hom\left( \mathcal{F},\mathcal{G}\right) :\alpha \left( \phi _{f}\left(
u\right) \right) =\psi _{\alpha _{0}\left( f\right) }\left( \alpha \left(
u\right) \right) ,~\forall \left( f\in \mathcal{F},~u\in A\right) \right\} .
\end{equation*}%
Sometime, the modules morphism $\left( \alpha ,Id_{\mathcal{F}}\right) $
will be denoted by $\alpha .$ The category of modules will be denoted by $%
\mathbf{Mod}$.
\end{definition}

\begin{definition}
\label{Arcus1} Let $\left( F,\nu ,N\right) $ and $\left( E,\pi ,M\right) $
be two vector bundles. The pair $\left( \varphi ,\varphi _{0}\right) $ given
by the commutative diagram 
\begin{equation*}
\begin{array}{ccc}
~\ \ F & ^{\underrightarrow{~\ \ \varphi ~\ \ }} & ~\ \ E \\ 
\nu \downarrow & ~\  & \pi \downarrow \\ 
~\ \ N & ^{\underrightarrow{~\ \ \varphi _{0}~\ \ }} & ~\ \ M \\ 
~\ \left( t_{\alpha }\right) &  & ~\ \left( s_{a}\right)%
\end{array}%
,
\end{equation*}%
is a vector bundles morphism from $\left( F,\nu ,N\right) $ to $\left( E,\pi
,M\right) $, if there exists $\Phi _{\alpha }^{a}\in \mathcal{F}\left(
M\right) $ such that the application $\Gamma \left( \varphi ,\varphi
_{0}\right) $ from $\left( \Gamma \left( F,\nu ,N\right) ,+,\odot \right) $
over $\mathcal{F}\left( M\right) $ to $\left( \Gamma \left( E,\pi ,M\right)
,+,\cdot \right) $ over $\mathcal{F}\left( M\right) $ given by%
\begin{equation*}
\Gamma \left( \varphi ,\varphi _{0}\right) \left( z^{\alpha }\odot t_{\alpha
}\right) =z^{\alpha }\cdot \Gamma \left( \varphi ,\varphi _{0}\right) \left(
t_{\alpha }\right) =\left( z^{\alpha }\cdot \Phi _{\alpha }^{a}\right) \cdot
s_{a},
\end{equation*}%
is a modules morphism. In literature, $\varphi $ is a vector bundles
morphism covering $\varphi _{0}$. The modules morphism $\Gamma \left(
\varphi ,\varphi _{0}\right) $ will be called the modules morphism
associated to the vector bundles morphism $\left( \varphi ,\varphi
_{0}\right) .$
\end{definition}

Note that $\varphi _{\mid F_{x}}$ is a real vector spaces morphism from $%
F_{x}$ to $E_{\varphi _{0}\left( x\right) }$ satisfying 
\begin{equation*}
\varphi \left( \left( z^{\alpha }\odot t_{\alpha }\right) \left( x\right)
\right) =\left( \Gamma \left( \varphi ,\varphi _{0}\right) \left( z^{\alpha
}\odot t_{\alpha }\right) \right) \left( \varphi _{0}\left( x\right) \right)
.
\end{equation*}
The following is a locally representation of a vector bundle morphism.

\begin{proposition}
Let $\left( F,\nu ,N\right) $ and $\left( E,\pi ,M\right) $ be two vector
bundles. The pair $\left( \varphi ,\varphi _{0}\right) $ given by the
commutative diagram 
\begin{equation*}
\begin{array}{ccc}
~\ \ F & ^{\underrightarrow{~\ \ \varphi ~\ \ }} & ~\ \ E \\ 
\nu \downarrow & ~\  & \pi \downarrow \\ 
~\ \ N & ^{\underrightarrow{~\ \ \varphi _{0}~\ \ }} & ~\ \ M \\ 
~\ \left( t_{\alpha }\right) &  & ~\ \left( s_{a}\right)%
\end{array}%
,
\end{equation*}%
where $\varphi _{0}\in Diff\left( N,M\right) ,$ is a vector bundles morphism
if and only if there exists $\varphi _{\alpha }^{a}\in \mathcal{F}\left(
N\right) $ such that 
\begin{equation*}
\Gamma \left( \varphi ,\varphi _{0}\right) \left( z^{\alpha }\cdot t_{\alpha
}\right) =\left( z^{\alpha }\cdot \varphi _{\alpha }^{a}\right) \circ
\varphi _{0}^{-1}\cdot s_{a}.
\end{equation*}
\end{proposition}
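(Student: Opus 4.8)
The plan is to deduce the Proposition directly from Definition~\ref{Arcus1} by a change of the functional coefficients along the diffeomorphism $\varphi_{0}$. First I would isolate two elementary observations. Since $\varphi_{0}\in Diff(N,M)$, the assignment $f\longmapsto f\circ\varphi_{0}^{-1}$ is a unitary ring isomorphism from $\mathcal{F}(N)$ onto $\mathcal{F}(M)$, with inverse $g\longmapsto g\circ\varphi_{0}$; in particular, for functions on the trivializing opens, $\varphi_{\alpha}^{a}\in\mathcal{F}(N)$ if and only if $\varphi_{\alpha}^{a}\circ\varphi_{0}^{-1}\in\mathcal{F}(M)$, and $(fg)\circ\varphi_{0}^{-1}=(f\circ\varphi_{0}^{-1})\cdot(g\circ\varphi_{0}^{-1})$. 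Secondly, for the module $(\Gamma(F,\nu,N),+,\odot)$ over $\mathcal{F}(M)$, where $g\odot z=(g\circ\varphi_{0})\cdot z$, any local section written as $z=y^{\alpha}\cdot t_{\alpha}$ with $y^{\alpha}\in\mathcal{F}(N)$ also reads $z=(y^{\alpha}\circ\varphi_{0}^{-1})\odot t_{\alpha}$, and conversely $z^{\alpha}\odot t_{\alpha}=(z^{\alpha}\circ\varphi_{0})\cdot t_{\alpha}$ for $z^{\alpha}\in\mathcal{F}(M)$. Hence the local frame $(t_{\alpha})$ plays the same generating role for the $\odot$-module over $\mathcal{F}(M)$ as for the usual $\mathcal{F}(N)$-module, the coefficients being merely precomposed with $\varphi_{0}^{-1}$.

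For the necessity, I would assume $(\varphi,\varphi_{0})$ is a vector bundles morphism, invoke Definition~\ref{Arcus1} to get $\Phi_{\alpha}^{a}\in\mathcal{F}(M)$ with $\Gamma(\varphi,\varphi_{0})(z^{\alpha}\odot t_{\alpha})=(z^{\alpha}\cdot\Phi_{\alpha}^{a})\cdot s_{a}$, and set $\varphi_{\alpha}^{a}:=\Phi_{\alpha}^{a}\circ\varphi_{0}\in\mathcal{F}(N)$. For arbitrary $z^{\alpha}\in\mathcal{F}(N)$ one then computes, using the two observations,
\begin{equation*}
\Gamma(\varphi,\varphi_{0})(z^{\alpha}\cdot t_{\alpha})=\bigl((z^{\alpha}\circ\varphi_{0}^{-1})\cdot\Phi_{\alpha}^{a}\bigr)\cdot s_{a}=\bigl((z^{\alpha}\cdot\varphi_{\alpha}^{a})\circ\varphi_{0}^{-1}\bigr)\cdot s_{a},
\end{equation*}
which is the claimed identity. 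For the sufficiency, given $\varphi_{\alpha}^{a}\in\mathcal{F}(N)$ with $\Gamma(\varphi,\varphi_{0})(z^{\alpha}\cdot t_{\alpha})=((z^{\alpha}\cdot\varphi_{\alpha}^{a})\circ\varphi_{0}^{-1})\cdot s_{a}$, I would put $\Phi_{\alpha}^{a}:=\varphi_{\alpha}^{a}\circ\varphi_{0}^{-1}\in\mathcal{F}(M)$ and check that the formula of Definition~\ref{Arcus1} holds: for $z^{\alpha}\in\mathcal{F}(M)$,
\begin{equation*}
\Gamma(\varphi,\varphi_{0})(z^{\alpha}\odot t_{\alpha})=\Gamma(\varphi,\varphi_{0})\bigl((z^{\alpha}\circ\varphi_{0})\cdot t_{\alpha}\bigr)=\bigl(((z^{\alpha}\circ\varphi_{0})\cdot\varphi_{\alpha}^{a})\circ\varphi_{0}^{-1}\bigr)\cdot s_{a}=(z^{\alpha}\cdot\Phi_{\alpha}^{a})\cdot s_{a}.
\end{equation*}
It then remains to verify that this $\Gamma(\varphi,\varphi_{0})$ is a modules morphism from $(\Gamma(F,\nu,N),+,\odot)$ to $(\Gamma(E,\pi,M),+,\cdot)$ over $\mathcal{F}(M)$: additivity is immediate, and for $g\in\mathcal{F}(M)$ and $z=y^{\alpha}\cdot t_{\alpha}$ one has $g\odot z=((g\circ\varphi_{0})\cdot y^{\alpha})\cdot t_{\alpha}$, whence $\Gamma(\varphi,\varphi_{0})(g\odot z)=g\cdot\Gamma(\varphi,\varphi_{0})(z)$. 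By Definition~\ref{Arcus1}, $(\varphi,\varphi_{0})$ is then a vector bundles morphism.

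I do not expect a serious obstacle: the entire content is the correspondence $\Phi_{\alpha}^{a}\leftrightarrow\varphi_{\alpha}^{a}=\Phi_{\alpha}^{a}\circ\varphi_{0}$, which is available precisely because $\varphi_{0}$ is a diffeomorphism. The only points demanding care are the bookkeeping between the two actions — the usual $\mathcal{F}(N)$-action ``$\cdot$'' on $\Gamma(F,\nu,N)$ and the $\mathcal{F}(M)$-action ``$\odot$'' of the preceding Example — and the consistent placement of $\circ\varphi_{0}$ versus $\circ\varphi_{0}^{-1}$. One should also keep in mind that $(t_{\alpha})$ and $(s_{a})$ are frames over trivializing opens and that the displayed identity is really the pointwise statement $\varphi(t_{\alpha}(x))=\varphi_{\alpha}^{a}(x)\cdot s_{a}(\varphi_{0}(x))$, so independence of the chosen frames (equivalently, global consistency of the $\varphi_{\alpha}^{a}$ under the usual transition functions) is the standard argument and needs only to be mentioned.
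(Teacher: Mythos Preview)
Your proof is correct and follows essentially the same route as the paper: both rewrite $z^{\alpha}\cdot t_{\alpha}=(z^{\alpha}\circ\varphi_{0}^{-1})\odot t_{\alpha}$, apply Definition~\ref{Arcus1}, and pass between $\Phi_{\alpha}^{a}\in\mathcal{F}(M)$ and $\varphi_{\alpha}^{a}=\Phi_{\alpha}^{a}\circ\varphi_{0}\in\mathcal{F}(N)$ via the diffeomorphism. Your version is simply more explicit, separating the two implications and spelling out the $\mathcal{F}(M)$-linearity check that the paper leaves implicit in its bidirectional chain of equalities.
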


\begin{proof}
Because $\left( \varphi ,\varphi _{0}\right) $ is a vector bundles morphism
from $\left( F,\nu ,N\right) $ to $\left( E,\pi ,M\right) $ if and only if
\begin{eqnarray*}
\Gamma \left( \varphi ,\varphi _{0}\right) \left( z^{\alpha }\cdot t_{\alpha
}\right) &=&\Gamma \left( \varphi ,\varphi _{0}\right) \left( \left(
z^{\alpha }\circ \varphi _{0}^{-1}\right) \odot t_{\alpha }\right) =\left[
\left( z^{\alpha }\circ \varphi _{0}^{-1}\right) \cdot \Phi _{\alpha }^{a}%
\right] \cdot s_{a} \\
&=&\left[ z^{\alpha }\cdot \Phi _{\alpha }^{a}\circ \varphi _{0}\right]
\circ \varphi _{0}^{-1}\cdot s_{a}\overset{\varphi _{\alpha }^{a}=\Phi
_{\alpha }^{a}\circ \varphi _{0}}{=}\left( z^{\alpha }\cdot \varphi _{\alpha
}^{a}\right) \circ \varphi _{0}^{-1}\cdot s_{a},
\end{eqnarray*}%
we obtain the conclusion of the proposition.
\end{proof}

\begin{corollary}
Let $\left( F,\nu ,N\right) $ and $\left( E,\pi ,N\right) $ be two vector
bundles. The pair $\left( \varphi ,Id_{N}\right) $ given by the commutative
diagram 
\begin{equation*}
\begin{array}{ccc}
~\ \ F & ^{\underrightarrow{~\ \ \varphi ~\ \ }} & ~\ \ E \\ 
\nu \downarrow & ~\  & \pi \downarrow \\ 
~\ \ N & ^{\underrightarrow{~\ \ Id_{N}~\ \ }} & ~\ \ N \\ 
~\ \left( t_{\alpha }\right) &  & ~\ \left( s_{a}\right)%
\end{array}%
\end{equation*}%
is a vector bundles morphism if and only if there exists $\varphi _{\alpha
}^{a}\in \mathcal{F}\left( N\right) $ such that%
\begin{equation*}
\Gamma \left( \varphi ,Id_{N}\right) \left( z^{\alpha }\cdot t_{\alpha
}\right) =\left( z^{\alpha }\cdot \varphi _{\alpha }^{a}\right) \cdot s_{a}.
\end{equation*}
\end{corollary}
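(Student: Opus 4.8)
The plan is to deduce this Corollary as the degenerate case $\varphi_0=Id_N$ of the preceding Proposition, so essentially no new work is needed beyond checking that the hypotheses specialize correctly. First I would observe that $Id_N\in Diff(N,N)$, so the Proposition applies verbatim to the pair $\left(\varphi,Id_N\right)$ viewed as a candidate vector bundles morphism from $\left(F,\nu,N\right)$ to $\left(E,\pi,N\right)$. Thus $\left(\varphi,Id_N\right)$ is a vector bundles morphism if and only if there exist $\varphi_{\alpha}^{a}\in\mathcal{F}\left(N\right)$ with
\begin{equation*}
\Gamma\left(\varphi,Id_N\right)\left(z^{\alpha}\cdot t_{\alpha}\right)=\left(z^{\alpha}\cdot\varphi_{\alpha}^{a}\right)\circ Id_N^{-1}\cdot s_a.
\end{equation*}

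The only remaining step is the trivial simplification $Id_N^{-1}=Id_N$, whence $\left(z^{\alpha}\cdot\varphi_{\alpha}^{a}\right)\circ Id_N^{-1}=z^{\alpha}\cdot\varphi_{\alpha}^{a}$, giving exactly the claimed formula $\Gamma\left(\varphi,Id_N\right)\left(z^{\alpha}\cdot t_{\alpha}\right)=\left(z^{\alpha}\cdot\varphi_{\alpha}^{a}\right)\cdot s_a$. For completeness I might also indicate, following the proof of the Proposition, that in this case the action $\odot$ of $\mathcal{F}\left(N\right)$ on $\Gamma\left(F,\nu,N\right)$ induced by $\varphi_0=Id_N$ coincides with the usual action $\cdot$ (since $\varphi_0^{\ast}=Id_N^{\ast}$ is the identity on functions), so that the constants $\Phi_{\alpha}^{a}$ and $\varphi_{\alpha}^{a}$ of Definition~\ref{Arcus1} and the Proposition literally agree here; there is no discrepancy between pulling back along $Id_N$ and not pulling back at all.

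I do not anticipate any genuine obstacle: the statement is a direct corollary, and the "hard part", such as it is, amounts only to making sure the identifications of the module structures are stated cleanly rather than glossed over. If one wanted a self-contained argument not invoking the Proposition, the alternative would be to unwind Definition~\ref{Arcus1} directly with $\varphi_0=Id_N$, noting that $\odot$ reduces to $\cdot$ and reading off the local formula, which is equally short.
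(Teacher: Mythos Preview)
Your proposal is correct and matches the paper's approach: the Corollary is stated without proof immediately after the Proposition, so it is meant to be read exactly as you do, namely as the special case $\varphi_0=Id_N$ with the trivial simplification $\left(z^{\alpha}\cdot\varphi_{\alpha}^{a}\right)\circ Id_N^{-1}=z^{\alpha}\cdot\varphi_{\alpha}^{a}$.
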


\begin{definition}
A prealgebra over $\mathcal{F}$ is a pair $\left( A,\left[ ,\right]
_{A}\right) ,$ where $A$ is a module over $\mathcal{F}$ and the operation $%
\left[ ,\right] _{A}$ is biadditive. In particular, if the operation $%
[,]_{A} $ is bihomogenouse, then the pair $\left( A,\left[ ,\right]
_{A}\right) $ is called algebra over $\mathcal{F}$.
\end{definition}

\begin{example}
The set $Der(\mathcal{F})$ of derivations of the unitary ring $\mathcal{F}$
is a module over $\mathcal{F}$ and the operation $[,]_{Der(\mathcal{F}%
)}^{\circ }$ given by 
\begin{equation*}
\lbrack X,Y]_{Der(\mathcal{F})}^{\circ }(f)=X\circ Y\left( f\right) -Y\circ
X\left( f\right) ,~\forall f\in \mathcal{F},
\end{equation*}%
satisfy the compatibility condition 
\begin{equation*}
\lbrack X,f\cdot Y]_{Der(\mathcal{F})}^{\circ }=f\cdot \lbrack X,Y]_{Der(%
\mathcal{F})}^{\circ }+X(f)\cdot Y,
\end{equation*}%
for any $X,Y\in Der(\mathcal{F})$ and $f\in \mathcal{F}.$ So, $\left( Der(%
\mathcal{F}),[,]_{Der(\mathcal{F})}^{\circ }\right) $ is a prealgebra over $%
\mathcal{F},$ but it is not an algebra over $\mathcal{F}.$ Therefore, the
class of prealgebras is larger than the class of algebras.
\end{example}

\begin{definition}
If $\left( A,\left[ ,\right] _{A}\right) $ and $\left( A^{\prime },\left[ ,%
\right] _{A^{\prime }}\right) $ are (pre)algebras over over $\mathcal{F}$
and $\mathcal{F}^{\prime }$ respectively, then the set 
\begin{equation*}
\left\{ \left( \alpha ,\alpha _{0}\right) \in \mathbf{Mod}\left( A,A^{\prime
}\right) :\alpha \left( \lbrack u,v]_{A}\right) =[\alpha \left( u\right)
,\alpha \left( v\right) ]_{A^{\prime }}\right\} ,
\end{equation*}%
will be called the set of morphisms from $\left( A,\left[ ,\right]
_{A}\right) $ to $\left( A^{\prime },\left[ ,\right] _{A^{\prime }}\right) .$
\end{definition}

We note by $\mathbf{(Pre)Alg}$ the category of (pre)algebras.

\begin{definition}
If $(A,[,]_{A})$ is a (pre)algebra over $\mathcal{F}$ such that $%
[u,u]_{A}=0, $ for any $u\in A$, then we will say that $(A,[,]_{A})$ is an
almost Lie (pre)algebra over $\mathcal{F}$. In addition, if 
\begin{equation*}
\lbrack u,[v,z]_{A}]_{A}+[z,[u,v]_{A}]_{A}+[v,[z,u]_{A}]_{A}=0,
\end{equation*}%
for any $u,v~,z\in A,$ then we say that the triple $(A,[,]_{A})$ is a Lie
(pre)algebra over $\mathcal{F}$.
\end{definition}

We denote by $\mathbf{Lie(Pre)Alg}$ \textit{the category of Lie (pre)algebras%
}.

\begin{example}
$\left( Der(\mathcal{F}),[,]_{Der(\mathcal{F})}^{\circ }\right) $ is a Lie
prealgebra over $\mathcal{F}$.
\end{example}

\section{Generalized (Almost) Lie Algebras/Algebroids}

\ \ \ \ \ \ \ \ 

We begin this section by a new extension of the usual notion of Lie algebra
as the following.

\begin{definition}
A generalized (almost) Lie algebra over unitary and commutative ring $%
\mathcal{F}$ is a triple $(A,[,]_{A},\left( \rho ,\rho _{0}\right) )$
satisfying

$1.$ $(A,[,]_{A})$ is a (almost) Lie prealgebra over $\mathcal{F}$;

$2.$ $\left( \rho ,\rho _{0}\right) $ is a modules morphism from $\left(
A,+,\cdot \right) $ to $\left( Der(\mathcal{F}),+,\cdot \right) $ (called
anchor map) such that $\rho _{0}$ is invertible and satisfies the
compatibility condition 
\begin{equation*}
\lbrack u,f\cdot v]_{A}=f\cdot \lbrack u,v]_{A}+\rho _{0}^{-1}\left( \rho
(u)(\rho _{0}\left( f\right) )\right) \cdot v,
\end{equation*}%
for any $u,v\in A$ and $f\in \mathcal{F}$.

The generalized Lie algebra $(A,[,]_{A},\left( \rho ,Id_{\mathcal{F}}\right)
)$ will be denoted $(A,[,]_{A},\rho ).$
\end{definition}

\begin{remark}
The triple $(A,[,]_{A},\left( \rho ,\rho _{0}\right) )$ is a generalized Lie
algebra over unitary and commutative ring $\mathcal{F}$ if and only if $%
(A,[,]_{A},\left( \rho ,\rho _{0}\right) )$ is a generalized almost Lie
algebra over unitary and commutative ring $\mathcal{F}$ (see \cite{7},\cite%
{8}) such that 
\begin{equation*}
\lbrack u,[v,z]_{A}]_{A}+[z,[u,v]_{A}]_{A}+[v,[z,u]_{A}]_{A}=0,
\end{equation*}%
for any $u,v~,z\in A.$
\end{remark}

\begin{theorem}
If $(A,[,]_{A},\rho )$ is a generalized Lie $\mathcal{F}$-algebra such that $%
A$ is a faithful module, then $\rho $ is a prealgebras morphism from $%
(A,[,]_{A})$ to $(Der(\mathcal{F}),[,]_{Der(\mathcal{F})})$.
\end{theorem}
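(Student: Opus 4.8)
The plan is to show that $\rho([u,v]_A) = [\rho(u),\rho(v)]_{Der(\mathcal F)}$ for all $u,v\in A$, using the compatibility condition and the faithfulness of $A$. Since $A$ is a faithful module, it suffices to prove that, for every $f\in\mathcal F$,
\begin{equation*}
\big(\rho([u,v]_A) - [\rho(u),\rho(v)]_{Der(\mathcal F)}\big)(f)\cdot w
\end{equation*}
vanishes for all $w\in A$ — equivalently, to produce an element of $A$ which is simultaneously forced to be $0$ by both sides of a computation and whose coefficient is the derivation $\rho([u,v]_A) - [\rho(u),\rho(v)]_{Der(\mathcal F)}$. The standard device is to expand $[u,[v,f\cdot w]_A]_A$ in two different ways.

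First I would compute $[u,[v,f\cdot w]_A]_A$ by applying the compatibility condition to the inner bracket and then again to the outer one, collecting all terms; the symmetric expression for $[v,[u,f\cdot w]_A]_A$ is obtained by swapping $u\leftrightarrow v$. Subtracting, and using antisymmetry $[u,v]_A=-[v,u]_A$ (which holds since $(A,[,]_A)$ is an almost Lie prealgebra, so $[x,x]_A=0$ and hence the bracket is skew), the terms $f\cdot[u,[v,w]_A]_A - f\cdot[v,[u,w]_A]_A$ combine with $f\cdot[[u,v]_A,w]_A$ via the Jacobi identity (available because $(A,[,]_A)$ is a \emph{Lie} prealgebra), so those $\mathcal F$-linear-in-$f$ contributions cancel against the expansion of $[[u,v]_A,f\cdot w]_A$. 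What remains, after this cancellation, is an identity of the form $\rho_0^{-1}\!\big(\rho([u,v]_A)(\rho_0(f))\big)\cdot w$ on one side and $\rho_0^{-1}\!\big(\rho(u)(\rho_0(\rho_0^{-1}(\rho(v)(\rho_0(f))))) - \rho(v)(\rho_0(\rho_0^{-1}(\rho(u)(\rho_0(f)))))\big)\cdot w$ on the other; the inner $\rho_0\circ\rho_0^{-1}$ collapse, leaving $\rho_0^{-1}\big((\rho(u)\circ\rho(v) - \rho(v)\circ\rho(u))(\rho_0(f))\big)\cdot w = \rho_0^{-1}\big([\rho(u),\rho(v)]_{Der(\mathcal F)}^{\circ}(\rho_0(f))\big)\cdot w$. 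Since $\rho=Id_{\mathcal F}$ here (the notation $(A,[,]_A,\rho)$ means $\rho_0=Id_{\mathcal F}$, per the definition), all the $\rho_0$'s are the identity and this simplifies to the clean statement $\rho([u,v]_A)(f)\cdot w = [\rho(u),\rho(v)]_{Der(\mathcal F)}^{\circ}(f)\cdot w$.

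Since this holds for every $w\in A$ and $A$ is faithful, the representing endomorphisms agree, so $\rho([u,v]_A)(f) = [\rho(u),\rho(v)]_{Der(\mathcal F)}^{\circ}(f)$; as $f$ was arbitrary, $\rho([u,v]_A) = [\rho(u),\rho(v)]_{Der(\mathcal F)}^{\circ}$. Together with the fact (already part of the hypothesis that $\rho$ is a modules morphism) that $\rho$ is additive, this says exactly that $\rho$ is a morphism of prealgebras from $(A,[,]_A)$ to $(Der(\mathcal F),[,]_{Der(\mathcal F)}^{\circ})$. I expect the main obstacle to be purely bookkeeping: keeping the nested applications of the compatibility condition straight and verifying that the Jacobi-type terms cancel precisely, rather than any conceptual difficulty — the faithfulness hypothesis is what makes the final "cancel $w$" step legitimate and is the only place it is used.
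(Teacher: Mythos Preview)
Your proposal is correct and follows essentially the same route as the paper: apply the compatibility condition to expand $[[u,v]_A,f\cdot w]_A$, $[u,[v,f\cdot w]_A]_A$, and $[v,[u,f\cdot w]_A]_A$, invoke the Jacobi identity to cancel the $f$-linear pieces, and use faithfulness to strip off the $w$. The only cosmetic difference is that the paper works directly with $\rho_0=Id_{\mathcal F}$ from the start rather than carrying the general $\rho_0$ and then specializing.
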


\begin{proof}
Let $u,v,w\in A$ and $f\in \mathcal{F}$. Since $(A,[,]_{A})$ is a Lie $%
\mathcal{F}$-algebra, we have the Jacobi identity:%
\begin{equation}
\lbrack \lbrack u,v]_{A},fw]_{A}=[u,[v,fw]_{A}]_{A}-[v,[u,fw]_{A}]_{A}.
\label{Jacob}
\end{equation}%
Using the definition of generalized Lie $\mathcal{F}$-algebra, we obtain
\begin{equation*}
\lbrack \lbrack u,v]_{A},fw]_{A}=f[[u,v]_{A},w]_{A}+\rho \lbrack
u,v]_{A}(f)\cdot w.
\end{equation*}%
Similarly, we get
\begin{equation*}
\begin{array}{cl}
\lbrack u,[v,fw]_{A}]_{A} & =[u,f[v,w]_{A}+\rho (v)(f)\cdot w]_{A} \\
& =f[u,[v,w]_{A}]_{A}+\rho (u)(f)\cdot \lbrack v,w]_{A} \\
& \ \ \ +\rho (v)(f)\cdot \lbrack u,w]_{A}+\rho (u)(\rho (v)(f))\cdot w,%
\end{array}%
\end{equation*}%
and%
\begin{equation*}
\begin{array}{cl}
\lbrack v,[u,fw]_{A}]_{A} & =[v,f[u,w]_{A}+\rho (u)(f)\cdot w]_{A} \\
& =f[v,[u,w]_{A}]_{A}+\rho (v)(f)\cdot \lbrack u,w]_{A} \\
& \ \ \ +\rho (u)(f)\cdot \lbrack v,w]_{A}+\rho (v)(\rho (u)(f))\cdot w.%
\end{array}%
\end{equation*}%
Setting three above equations in (\ref{Jacob}) and using Jacobi identity, we
obtain
\begin{equation*}
\begin{array}{cl}
\rho \lbrack u,v]_{A}(f)\cdot w & =\rho (u)(\rho (v)(f))\cdot w-\rho
(v)(\rho (u)(f))\cdot w \\
& =[\rho (u),\rho (v)]_{Der(\mathcal{F})}(f)\cdot w,%
\end{array}%
\end{equation*}%
and consequently
\begin{equation*}
\rho \lbrack u,v]_{A}=[\rho (u),\rho (v)]_{Der(\mathcal{F})}.
\end{equation*}%
Thus, the modules morphism $\rho $ is a prealgebras morphism from $%
(A,[,]_{A})$ to $(Der(\mathcal{F}),[,]_{Der(\mathcal{F})})$.
\end{proof}

\begin{proposition}
If $\left( \left( A,\left[ ,\right] _{A}\right) ,\left( \mathcal{F},\cdot
\right) ,\rho \right) $ is a Lie-Rinehart algebra over the field $\mathbb{K}$%
, then the triple $\left( A,\left[ ,\right] _{A},\rho \right) $ is a
generalized Lie algebra over $\mathcal{F}$ such that its anchor map $\rho $
is a prealgebras morphism from $\left( A,\left[ ,\right] _{A}\right) $ to $%
\left( Der(\mathcal{F}),[,]_{Der(\mathcal{F})}^{\circ }\right) .$
\end{proposition}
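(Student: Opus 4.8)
The plan is to unwind both definitions and check that the axioms $LR1$--$LR4$ of a Lie-Rinehart algebra deliver, one by one, the conditions required of a generalized Lie algebra over $\mathcal{F}$ with $\rho_{0}=Id_{\mathcal{F}}$. First I would observe that $\mathcal{F}$, being a commutative unitary $\mathbb{K}$-algebra by $LR2$, is in particular a commutative unitary ring, and that $A$ is a module over $\mathcal{F}$ by $LR3$; since $[,]_{A}$ is $\mathbb{K}$-bilinear by $LR1$ it is a fortiori biadditive, so $\left(A,[,]_{A}\right)$ is a prealgebra over $\mathcal{F}$. Because $[,]_{A}$ is a Lie bracket over $\mathbb{K}$, we have $[u,u]_{A}=0$ and the Jacobi identity $[u,[v,z]_{A}]_{A}+[z,[u,v]_{A}]_{A}+[v,[z,u]_{A}]_{A}=0$ for all $u,v,z\in A$; hence $\left(A,[,]_{A}\right)$ is a Lie prealgebra over $\mathcal{F}$. (It is generally \emph{not} an algebra over $\mathcal{F}$, precisely because of the Leibniz term in $LR4$, consistent with Example 2.9.)

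Next I would check that the pair $\left(\rho,Id_{\mathcal{F}}\right)$ is an admissible anchor. The identity $Id_{\mathcal{F}}\colon\mathcal{F}\rightarrow\mathcal{F}$ is a unitary ring morphism and is invertible. By $LR3$ and $LR4$ the map $\rho\colon A\rightarrow Der(\mathcal{F})$ is additive and $\mathcal{F}$-linear, so $\left(\rho,Id_{\mathcal{F}}\right)$ is a modules morphism from $\left(A,+,\cdot\right)$ over $\mathcal{F}$ to $\left(Der(\mathcal{F}),+,\cdot\right)$ over $\mathcal{F}$. Finally, the compatibility condition in $LR4$ reads $[u,f\cdot v]_{A}=f\cdot[u,v]_{A}+\rho(u)(f)\cdot v$, and since $\rho_{0}=Id_{\mathcal{F}}$ we may write $\rho(u)(f)=Id_{\mathcal{F}}^{-1}\left(\rho(u)\left(Id_{\mathcal{F}}(f)\right)\right)=\rho_{0}^{-1}\left(\rho(u)(\rho_{0}(f))\right)$, which is exactly the compatibility condition in the definition of a generalized Lie algebra. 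Collecting these facts, $\left(A,[,]_{A},\rho\right)=\left(A,[,]_{A},\left(\rho,Id_{\mathcal{F}}\right)\right)$ is a generalized Lie algebra over $\mathcal{F}$.

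For the remaining assertion, I would use that $LR4$ states that $\rho$ is an algebras morphism from the Lie algebra $\left(A,[,]_{A}\right)$ to $\left(Der(\mathcal{F}),[,]_{Der(\mathcal{F})}^{\circ}\right)$, i.e. $\rho\left([u,v]_{A}\right)=[\rho(u),\rho(v)]_{Der(\mathcal{F})}^{\circ}$ for all $u,v\in A$. Combined with the modules-morphism property of $\rho$ established above, this is precisely the statement that $\rho$ is a prealgebras morphism from $\left(A,[,]_{A}\right)$ to $\left(Der(\mathcal{F}),[,]_{Der(\mathcal{F})}^{\circ}\right)$.

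I do not expect a genuine obstacle: the proposition is essentially a dictionary translation between the two axiom systems. The only point that merits attention is that the bracket-preservation of $\rho$ should be read off directly from $LR4$ rather than obtained by invoking the preceding Theorem through a faithfulness hypothesis, since a Lie-Rinehart module need not be faithful; it is the algebra-morphism clause of $LR4$ that carries the content here.
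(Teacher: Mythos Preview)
Your proof is correct and takes essentially the same approach as the paper: both verify directly from $LR1$--$LR4$ that $(A,[,]_A)$ is a Lie prealgebra over $\mathcal{F}$, that $(\rho,Id_{\mathcal{F}})$ is a modules morphism satisfying the required compatibility, and that $\rho$ preserves brackets. The only cosmetic difference is that the paper records the prealgebra-morphism property via the identity $\rho([u,f\cdot v]_A)=[\rho(u),f\cdot\rho(v)]_{Der(\mathcal{F})}^{\circ}$ obtained by a short expansion, whereas you read bracket-preservation straight from the ``algebras morphism'' clause of $LR4$; your remark that one should not invoke the preceding theorem here (since faithfulness of $A$ is not assumed) is well taken.
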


\begin{proof}
Because $\left( \left( A,\left[ ,\right] _{A}\right) ,\left( \mathcal{F}%
,\cdot \right) ,\rho \right) $ is a Lie-Rinehart algebra over the field $%
\mathbb{K}$, it results that $\left( Der\left( \mathcal{F}\right) ,\left[ ,%
\right] _{Der\left( \mathcal{F}\right) }^{\circ }\right) $ is a Lie algebra
over $\mathbb{K}$ and $(A,[,]_{A})$ is an Lie prealgebra over $\mathcal{F}.$
Also, the anchor map $\rho $ is a modules morphism from $A$ to $Der\left(
\mathcal{F}\right) $ which satisfies the compatibility condition
\begin{equation*}
\lbrack u,f\cdot v]_{A}=f\cdot \lbrack u,v]_{A}+\rho (u)(f)\cdot v,
\end{equation*}%
for any $u,v\in A$ and $f\in \mathcal{F}.$ After some technical
computations, we obtain that the modules morphism $\rho $ is a morphism of
prealgebras over $\mathcal{F}$, namely%
\begin{equation*}
\rho \left( \left[ u,f\cdot v\right] _{A}\right) =\left[ \rho \left(
u\right) ,f\cdot \rho \left( v\right) \right] _{Der\left( \mathcal{F}\right)
}^{\circ },
\end{equation*}%
for any $u,v\in A$ and $f\in \mathcal{F}$. So, we obtain the conclusion of
the proposition.
\end{proof}

\begin{corollary}
\label{Arcus2} If $\left( A,\left[ ,\right] _{A},\rho \right) $ is a
generalized Lie algebra over $\mathcal{F}$ such that its anchor map $\rho $
is not a prealgebras morphism from $\left( A,\left[ ,\right] _{A}\right) $
to $\left( Der(\mathcal{F}),[,]_{Der(\mathcal{F})}^{\circ }\right) ,$ then
the triple $\left( \left( A,\left[ ,\right] _{A}\right) ,\left( \mathcal{F}%
,\cdot \right) ,\rho \right) $ can not be regarded as a Lie-Rinehart algebra
over a field $\mathbb{K}.$

So, if $\left( A,\left[ ,\right] _{A},\rho \right) $ is a generalized Lie
algebra such that $A$ is not a faithful module over $\mathcal{F},$ then the
triple $\left( \left( A,\left[ ,\right] _{A}\right) ,\left( \mathcal{F}%
,\cdot \right) ,\rho \right) $ can not be regarded as a Lie-Rinehart algebra
over a field $\mathbb{K}.$
\end{corollary}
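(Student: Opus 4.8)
The plan is to obtain both assertions by contraposition, using the two results just established. For the first assertion, suppose that the triple $((A,[,]_{A}),(\mathcal{F},\cdot),\rho)$ could be regarded as a Lie-Rinehart algebra over some field $\mathbb{K}$. Then the preceding Proposition applies verbatim and produces that the anchor map $\rho$ is a prealgebras morphism from $(A,[,]_{A})$ to $(Der(\mathcal{F}),[,]_{Der(\mathcal{F})}^{\circ})$, contradicting the standing hypothesis that $\rho$ is \emph{not} such a morphism. Hence no field $\mathbb{K}$ can witness the triple as a Lie-Rinehart algebra, which is the first claim. This step invokes only the axiom $LR4$ — that the anchor respects brackets — and makes no use of faithfulness.

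For the second assertion I reduce to the first one by locating where faithfulness enters. Going back to the computation in the proof of Theorem 3.3, for an \emph{arbitrary} generalized Lie $\mathcal{F}$-algebra the Jacobi identity together with the compatibility condition already give, for all $u,v,w\in A$ and $f\in\mathcal{F}$,
\begin{equation*}
\bigl(\rho\lbrack u,v]_{A}(f)-[\rho(u),\rho(v)]_{Der(\mathcal{F})}^{\circ}(f)\bigr)\cdot w=0 ,
\end{equation*}
so the element $\rho\lbrack u,v]_{A}(f)-[\rho(u),\rho(v)]_{Der(\mathcal{F})}^{\circ}(f)$ of $\mathcal{F}$ annihilates every $w\in A$, i.e.\ it lies in the kernel of the defining ring morphism $\phi\colon\mathcal{F}\to End(A)$. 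When $A$ is faithful this kernel is zero, the difference vanishes, and one recovers Theorem 3.3. When $A$ is \emph{not} faithful the kernel is non-zero, the displayed identity becomes vacuous, and the bracket-compatibility of $\rho$ is no longer automatic: it becomes a genuine extra condition, which fails for suitable choices — a concrete such generalized Lie algebra being exhibited in Example 3.8. Applying the first assertion to a triple whose anchor is not a prealgebras morphism into $(Der(\mathcal{F}),[,]_{Der(\mathcal{F})}^{\circ})$ then gives the second claim.

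I expect the delicate point to be exactly this last reduction. Theorem 3.3 only yields the implication ``$A$ faithful $\Rightarrow$ $\rho$ a prealgebras morphism'', whereas the second assertion needs the contrapositive to be upgraded from ``non-faithfulness \emph{allows} the anchor obstruction $\rho[u,v]_{A}(f)-[\rho(u),\rho(v)]_{Der(\mathcal{F})}^{\circ}(f)$ to be non-zero'' to ``non-faithfulness \emph{yields} a triple that is not a Lie-Rinehart algebra''; since with the abelian bracket and $\rho=0$ any non-faithful module still carries a Lie-Rinehart structure, the passage cannot be made by formal manipulation alone and must be supported either by a non-degeneracy requirement on the $\mathcal{F}$-action or, as here, by the explicit witness of Example 3.8. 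I would therefore arrange the write-up so that the dependence of the second assertion on Example 3.8 is displayed openly, rather than concealed inside an implicit strengthening of Theorem 3.3 to an ``if and only if''.
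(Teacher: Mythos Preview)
The paper states this corollary without proof, so there is nothing to compare against beyond the evident intent that it follow from the preceding Proposition and Theorem. Your first paragraph is correct and is exactly that intent: the first assertion is the contrapositive of the Proposition.

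Your caution about the second assertion is well placed, but the gap you flag is not one you can close by the route you propose. The second sentence of the corollary, read literally as a universal claim, is false: your own counterexample ($A$ any non-faithful $\mathcal{F}$-module, $[\,,\,]_{A}=0$, $\rho =0$) satisfies all four axioms $LR1$--$LR4$ of Definition~1.1 and is therefore a Lie--Rinehart algebra over any field $\mathbb{K}$ over which $\mathcal{F}$ is an algebra. Appealing to Example~3.8 does not repair this. That example exhibits \emph{one} non-faithful generalized Lie algebra whose anchor fails to be a prealgebras morphism, hence one that is not Lie--Rinehart by the first assertion; it does not show that \emph{every} non-faithful generalized Lie algebra has this defect, and the zero-bracket example shows that some do not. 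The implication the paper's ``So'' seems to invoke --- ``$A$ not faithful $\Rightarrow$ $\rho$ not a prealgebras morphism'' --- is the converse of Theorem~3.3, not its contrapositive, and it is false.

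So rather than arranging the write-up to ``display the dependence on Example~3.8'', you should state plainly that the second assertion is incorrect as a universal statement; at best it can be read as the existential claim that non-faithfulness \emph{permits} the triple to fall outside the Lie--Rinehart class, which is precisely what Example~3.8 witnesses.
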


\begin{example}
If $(L,[,]_{L})$ is a Lie algebra over $\mathcal{F}$, then considering the
null anchor map $0$ from $L$ to $Der(\mathcal{F})$, it results that $%
(L,[,]_{L},0)$ is a generalized Lie algebra over $\mathcal{F}$.
\end{example}

\begin{example}
If $\mathcal{F}$ is an unitary and commutative ring, then considering the
usual Lie bracket $[,]_{Der(\mathcal{F})}^{\circ }$ and the identity anchor
map $Id_{Der(\mathcal{F})},$ we will obtain that the triple 
\begin{equation*}
\left( Der(\mathcal{F}),[,]_{Der(\mathcal{F})}^{\circ },Id_{Der(\mathcal{F}%
)}\right)
\end{equation*}%
is a generalized Lie algebra over $\mathcal{F}$.
\end{example}

In the following we present a generalized Lie algebra which is not
Lie-Rinehart algebra.

\begin{example}
Let $\mathcal{F}$ be an unitary and commutative ring. If $Der(\mathcal{F})$
is a free module with basis $\{\partial _{i}\}_{i\in \overline{1,m}}$ and $%
\rho $ is a modules endomorphism of $Der(\mathcal{F})$, then we define the
application $Der(\mathcal{F})\times Der(\mathcal{F})~^{\underrightarrow{~\ \
\bullet ~\ \ }}~End(\mathcal{F})$ given by the equality 
\begin{equation*}
X\bullet Y=Y^{i}(X\circ \partial _{i})+\rho (X)(Y^{i})\partial _{i},
\end{equation*}%
where $Y=Y^{i}\partial _{i}$ and "$\circ $" is the usual composition
application. Now, we define 
\begin{equation*}
\lbrack X,Y]_{Der(\mathcal{F})}^{\bullet }=X\bullet Y-Y\bullet X,\ \ \forall
X,Y\in Der(\mathcal{F}).
\end{equation*}

Using the Leibniz property of $\partial _{i}$ and $\partial _{j}$ we get 
\begin{equation*}
\lbrack X,Y]_{Der(\mathcal{F})}^{\bullet }(f\cdot g)=[X,Y]_{Der(\mathcal{F}%
)}^{\bullet }(f)\cdot g+f\cdot \lbrack X,Y]_{Der(\mathcal{F})}^{\bullet }(g).
\end{equation*}

Thus, $[X,Y]_{Der(\mathcal{F})}^{\bullet }\in Der(\mathcal{F})$. Moreover,
if $f\in \mathcal{F}$, then we have 
\begin{equation*}
\lbrack X,fY]_{Der(\mathcal{F})}^{\bullet }=f[X,Y]_{Der(\mathcal{F}%
)}^{\bullet }+\rho (X)(f)Y.
\end{equation*}

Easily, we obtain that 
\begin{eqnarray*}
\lbrack f\cdot X,f\cdot X]_{Der(\mathcal{F})}^{\bullet } &=&f\cdot \lbrack
f\cdot X,X]_{Der(\mathcal{F})}^{\bullet }+\rho (f\cdot X)(f)\cdot X \\
&=&f\cdot \left( -[X,f\cdot X]_{Der(\mathcal{F})}^{\bullet }\right) +\left(
f\cdot \rho (X)(f)\right) \cdot X \\
&=&-f^{2}\cdot \lbrack X,X]_{Der(\mathcal{F})}^{\bullet }-f\cdot \left( \rho
(X)(f)\cdot X\right) +\left( f\cdot \rho (X)(f)\right) \cdot X=0.
\end{eqnarray*}

Also, the Jacobi identity holds for this bracket. We remark that if $\rho
\left( \partial _{i}\circ \partial _{j}\right) =$ $\rho \left( \partial
_{i}\right) \circ \rho \left( \partial _{j}\right) ,$ then $\rho $ is a
prealgebras morphism from $\left( Der(\mathcal{F}),[,]_{Der(\mathcal{F}%
)}^{\bullet }\right) $ to $\left( Der(\mathcal{F}),[,]_{Der(\mathcal{F}%
)}^{\circ }\right) .$ As the equality $\rho \left( \partial _{i}\circ
\partial _{j}\right) =$ $\rho \left( \partial _{i}\right) \circ \rho \left(
\partial _{j}\right) $ can not be check, because $\partial _{i}\circ
\partial _{j}\notin Der(\mathcal{F}),$ then $\rho $ can not be a prealgebras
morphism from $\left( Der(\mathcal{F}),[,]_{Der(\mathcal{F})}^{\bullet
}\right) $ to $\left( Der(\mathcal{F}),[,]_{Der(\mathcal{F})}^{\circ
}\right) $. Using Corollary \ref{Arcus2}, it result that the triple $(Der(%
\mathcal{F}),[,]_{Der(\mathcal{F})}^{\bullet },\rho )$ is an example by
generalized Lie algebra and the triple $\left( \left( Der(\mathcal{F}%
),[,]_{Der(\mathcal{F})}^{\bullet }\right) ,\left( \mathcal{F},\cdot \right)
,\rho \right) $ can not be regarded as a Lie-Rinehart algebra over a field $%
\mathbb{K}.$
\end{example}

\begin{definition}
A generalized (almost) Lie algebras morphism from $(A,[,]_{A},\left( \rho
,\rho _{0}\right) )$ over $\mathcal{F}$ to $(A^{\prime },[,]_{A^{\prime
}},\left( \rho ^{\prime },\rho _{0}^{\prime }\right) )$ over $\mathcal{F}%
^{\prime }$ is a couple $\left( \left( a,a_{0}\right) ,\left( b,b_{0}\right)
\right) ,$ where%
\begin{equation*}
\left( a,a_{0}\right) \in \mathbf{PreAlg}\left( \left( A,[,]_{A}\right)
,\left( A^{\prime },[,]_{A^{\prime }}\right) \right)
\end{equation*}%
and 
\begin{equation*}
\left( b,b_{0}\right) \in \mathbf{PreAlg}\left( \left( Der\left( \mathcal{F}%
\right) ,[,]_{Der\left( \mathcal{F}\right) }^{\circ }\right) ,\left(
Der\left( \mathcal{F}^{\prime }\right) ,[,]_{Der\left( \mathcal{F}^{\prime
}\right) }^{\circ }\right) \right)
\end{equation*}%
such that the following diagrams are commutative: 
\begin{equation*}
\begin{array}{rcl}
~\ \ \ ~\ \ \ \ \ A & ^{\underrightarrow{~\ \ a~\ \ }} & A^{\prime } \\ 
\ \rho \downarrow &  & \downarrow \rho ^{\prime } \\ 
~\ \ \ \ \ \ \ Der\left( \mathcal{F}\right) & ^{\underrightarrow{~\ \ b~\ \ }%
} & Der\left( \mathcal{F}^{\prime }\right)%
\end{array}%
\begin{array}{rcl}
~\ \ \ ~\ \ \ \ \ \mathcal{F} & ^{\underrightarrow{~\ \ a_{0}~\ \ }} & 
\mathcal{F}^{\prime } \\ 
\ \rho _{0}\downarrow &  & \downarrow \rho _{0}^{\prime } \\ 
~\ \ \ \ \ \ \ \mathcal{F} & ^{\underrightarrow{~\ \ b_{0}~\ \ }} & \mathcal{%
F}^{\prime }%
\end{array}%
\begin{array}{rcl}
~\ \ \ ~\ \ \ \ \ \mathcal{F} & ^{\underrightarrow{~\ \ a_{0}~\ \ }} & 
\mathcal{F}^{\prime } \\ 
\ \rho \left( u\right) \downarrow &  & \downarrow \rho ^{\prime }\circ
a\left( u\right) \\ 
~\ \ \ \ \ \ \ \mathcal{F} & ^{\underrightarrow{~\ \ b_{0}~\ \ }} & \mathcal{%
F}^{\prime }%
\end{array}%
\end{equation*}%
and%
\begin{equation*}
\left( \rho ^{\prime }\circ a\right) \left( u\right) \circ \left( b\circ
\rho \right) \left( v\right) -\left( \rho ^{\prime }\circ a\right) \left(
v\right) \circ \left( b\circ \rho \right) \left( u\right) \in Der\left( 
\mathcal{F}^{\prime }\right) ,
\end{equation*}%
for any $u,v\in A.$ Every endomorphism $\left( \left( a,a_{0}\right) ,\left(
Id_{Der(\mathcal{F})},Id_{\mathcal{F}}\right) \right) $ will be note simply $%
\left( a,a_{0}\right) .$
\end{definition}

We will note by $\mathbf{gala/gla}$ the category of generalized almost Lie
algebras/generalized Lie algebras.

\begin{remark}
Using the general framework of generalized Lie algebras, a generalized Lie
algebroid can be regarded as a triple $\left( (F,\nu ,N),[,]_{F,h},\left(
\rho ,\eta \right) \right) $ given by the diagrams 
\begin{equation*}
\begin{array}{c}
\begin{array}[b]{ccccc}
(F,[,]_{F,h}) & ^{\underrightarrow{~\ \ \ \rho \ \ \ \ }} & (TM,[,]_{TM}) & 
^{\underrightarrow{~\ \ \ Th\ \ \ \ }} & (TN,[,]_{TN}) \\ 
~\downarrow \nu &  & ~\ \ \downarrow \tau _{M} &  & ~\ \ \ \tau
_{N}\downarrow \ \ \ \ \ \ \  \\ 
N & ^{\underrightarrow{~\ \ \ \eta ~\ \ }} & M & ^{\underrightarrow{~\ \ \
h~\ \ }} & N%
\end{array}%
\end{array}%
,
\end{equation*}%
where $h$ and $\eta $ are arbitrary diffeomorphisms, $(\rho ,\eta )$ is a
vector bundles morphism from $(F,\nu ,N)$ to $(TM,\tau _{M},M)$ and $%
[,]_{F,h}$ is an operation on $\Gamma (F,\nu ,N)$ such that the triple 
\begin{equation*}
\left( \Gamma \left( F,\nu ,N\right) ,\left[ ,\right] _{F,h},\Gamma (Th\circ
\rho ,h\circ \eta )\right)
\end{equation*}%
is a generalized Lie algebra over $\mathcal{F}(N),$ where the anchor map $%
\Gamma (Th\circ \rho ,h\circ \eta )$ is given by the equality%
\begin{equation*}
\left( \Gamma (Th\circ \rho ,h\circ \eta )(u^{\alpha }t_{\alpha })\right)
\left( f\right) =u^{\alpha }\rho _{\alpha }^{i}\cdot \left( \frac{\partial
\left( f\circ h\right) }{\partial x^{i}}\circ h^{-1}\right) ,
\end{equation*}%
for any $u^{\alpha }t_{\alpha }\in \Gamma (F,\nu ,N)$ and $f\in \mathcal{F}%
\left( N\right) $.
\end{remark}

\begin{proposition}
A generalized Lie algebroid is a distinguished example by Lie algebroid.
\end{proposition}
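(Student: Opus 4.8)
The plan is to show that the data defining a generalized Lie algebroid on $(F,\nu ,N)$ is, once its anchor datum $(\rho ,\eta )$ has been rewritten, literally the data of an ordinary Lie algebroid on the \emph{same} vector bundle, while the special case $\eta =Id_{N}=h$ gives back the genuine Lie algebroids --- exactly the way every vector bundle is the pull-back of itself along the identity. Concretely, I would produce a bona fide vector bundles morphism $\hat{\rho }$ from $(F,\nu ,N)$ to $(TN,\tau _{N},N)$ covering $Id_{N}$ whose associated modules morphism equals $\Gamma (Th\circ \rho ,h\circ \eta )$, and then recognize the Lie algebroid axioms inside the compatibility condition of the definition.

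For the construction of $\hat{\rho }$, start from the Remark reformulating a generalized Lie algebroid: the triple $\left( \Gamma (F,\nu ,N),[,]_{F,h},\Gamma (Th\circ \rho ,h\circ \eta )\right) $ is a generalized Lie algebra over $\mathcal{F}(N)$, so $\Gamma (Th\circ \rho ,h\circ \eta )$ is a modules morphism from $\Gamma (F,\nu ,N)$ to $Der(\mathcal{F}(N))=\Gamma (TN,\tau _{N},N)$ covering $Id_{\mathcal{F}(N)}$, in particular $\mathcal{F}(N)$-linear in its section argument. Unwinding the explicit formula $\left( \Gamma (Th\circ \rho ,h\circ \eta )(u^{\alpha }t_{\alpha })\right) (f)=u^{\alpha }\rho _{\alpha }^{i}\cdot \left( \partial (f\circ h)/\partial x^{i}\circ h^{-1}\right) $ with the chain rule --- writing $h^{j}=y^{j}\circ h$ for local coordinates $(x^{i})$ on $M$ and $(y^{j})$ on $N$ --- one obtains $\left( \Gamma (Th\circ \rho ,h\circ \eta )(z^{\alpha }t_{\alpha })\right) (f)=\left( z^{\alpha }\hat{\rho }_{\alpha }^{j}\right) \cdot \partial f/\partial y^{j}$, with $\hat{\rho }_{\alpha }^{j}:=\rho _{\alpha }^{i}\cdot \left( \partial h^{j}/\partial x^{i}\circ h^{-1}\right) \in \mathcal{F}(N)$. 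One then checks that these $\hat{\rho }_{\alpha }^{j}$ transform as the components of a morphism covering the identity under changes of local frame of $F$ and of coordinates on $M$ and $N$, so that by the Corollary giving the local representation of vector bundles morphisms covering $Id_{N}$ there is a genuine vector bundles morphism $\hat{\rho }$ with $\Gamma (\hat{\rho },Id_{N})=\Gamma (Th\circ \rho ,h\circ \eta )$.

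With such a $\hat{\rho }$ at hand, the compatibility condition in the definition becomes $[u,f\cdot v]_{F,h}=f\cdot \lbrack u,v]_{F,h}+\left( \hat{\rho }(u)\right) (f)\cdot v$, which is the Leibniz rule of a Lie algebroid; biadditivity, antisymmetry and the Jacobi identity of $[,]_{F,h}$ are already part of the hypothesis that $\left( \Gamma (F,\nu ,N),[,]_{F,h}\right) $ is a Lie prealgebra over $\mathcal{F}(N)$, and $\mathbb{R}$-bilinearity follows from the Leibniz rule because $\hat{\rho }(u)$ annihilates constants. Hence $\left( \Gamma (F,\nu ,N),[,]_{F,h},\Gamma (\hat{\rho },Id_{N})\right) $ --- which as a triple coincides with $\left( \Gamma (F,\nu ,N),[,]_{F,h},\Gamma (Th\circ \rho ,h\circ \eta )\right) $ --- is a generalized Lie algebra over $\mathcal{F}(N)$ whose anchor covers $Id_{N}$, and by the Remark characterizing Lie algebroids (and skew-algebroids) this is precisely a Lie algebroid $\left( (F,\nu ,N),[,]_{F,h},(\hat{\rho },Id_{N})\right) $. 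Conversely, setting $\eta =Id_{N}=h$ gives $\hat{\rho }=\rho $, so every Lie algebroid is a generalized Lie algebroid of this shape; this is what the pull-back analogy, and the phrase ``distinguished example by Lie algebroid'', are meant to express.

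The only non-routine step is the construction of $\hat{\rho }$. As the paper itself stresses, $(Th\circ \rho ,h\circ \eta )$ is not a literal vector bundles morphism and $\Gamma (Th\circ \rho ,h\circ \eta )$ is merely a symbol, so one has to verify by a careful chart computation that it really is the modules morphism of an honest bundle morphism over $Id_{N}$. The delicate bookkeeping is caused by the mismatch between the base point $x\in N$ at which $\rho _{\alpha }^{i}$ is read off (its image $\eta (x)\in M$ lying in the chosen chart on $M$) and the point $h^{-1}(x)\in M$ at which $\partial h^{j}/\partial x^{i}$ is evaluated; these must combine through the chain rule into well-defined functions $\hat{\rho }_{\alpha }^{j}$ on $N$ that patch on chart overlaps. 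Once that is in place, the remaining verifications --- Leibniz, antisymmetry, the Jacobi identity --- are a direct transcription of the hypotheses, so no further obstacle is expected.
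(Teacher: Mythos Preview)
Your proposal is correct and follows essentially the same route as the paper: the key step in both is to rewrite the anchor via the chain rule, obtaining local components $\hat{\rho}_{\alpha}^{j}=\rho_{\alpha}^{i}\cdot(\partial h^{j}/\partial x^{i}\circ h^{-1})$ of a genuine vector bundles morphism $(\hat{\rho},Id_{N})$ from $(F,\nu,N)$ to $(TN,\tau_{N},N)$ (the paper calls this $(\theta,Id_{N})$), and then observe that the generalized Lie algebra data on $\Gamma(F,\nu,N)$ with this anchor is exactly the data of a Lie algebroid.

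The only difference is in the final packaging: you verify the Lie algebroid axioms (Leibniz, antisymmetry, Jacobi) directly from the hypotheses, whereas the paper first invokes its Theorem~3.3 (faithfulness of $\Gamma(F,\nu,N)$ forces the anchor to be a prealgebras morphism) to conclude that the triple is a Lie--Rinehart algebra over $\mathbb{R}$, and hence a Lie algebroid. Your route is slightly more self-contained; the paper's route has the advantage of exhibiting the anchor as a bracket-preserving map as a consequence rather than leaving it implicit. Your worry about the ``base point mismatch'' is more cautious than the paper, which simply asserts that $(\theta,Id_{N})$ is a vector bundles morphism without dwelling on the chart check; since $\rho_{\alpha}^{i}\in\mathcal{F}(N)$ and $\partial h^{j}/\partial x^{i}\circ h^{-1}\in\mathcal{F}(N)$, the product is unambiguously a function on $N$, so this point is not an obstacle.
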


\begin{proof}
Let $\left( (F,\nu ,N),[,]_{F,h},\left( \rho ,\eta \right) \right) $ be a
generalized Lie algebroid. Because
\begin{equation*}
\frac{\partial \left( f\circ h\right) }{\partial x^{i}}\circ h^{-1}=\frac{%
\partial h^{\tilde{\imath}}}{\partial x^{i}}\circ h^{-1}\cdot \frac{\partial
f}{\partial \varkappa ^{\tilde{\imath}}},
\end{equation*}%
then
\begin{eqnarray*}
\left( \Gamma (Th\circ \rho ,h\circ \eta )(u^{\alpha }t_{\alpha })\right)
\left( f\right) &=&\left( u^{\alpha }\cdot \rho _{\alpha }^{i}\cdot \frac{%
\partial h^{\tilde{\imath}}}{\partial x^{i}}\circ h^{-1}\right) \frac{%
\partial f}{\partial \varkappa ^{\tilde{\imath}}} \\
\overset{put}{=}\left( u^{\alpha }\theta _{\alpha }^{\tilde{\imath}}\frac{%
\partial }{\partial \varkappa ^{\tilde{\imath}}}\right) \left( f\right)
&=&\Gamma \left( \theta ,Id_{N}\right)(u^{\alpha }t_{\alpha }) \left( f\right) ,
\end{eqnarray*}%
where $\left( \theta ,Id_{N}\right) $ is a vector bundles morphism from $%
(F,\nu ,N)$ to $(TN,\tau _{N},N).$ Therefore, we obtain that the anchor map $%
\Gamma (Th\circ \rho ,h\circ \eta )$ is a modules morphism. As $\Gamma
\left( F,\nu ,N\right) $ is a faithful module, then the anchor map of the
generalized Lie algebra
\begin{equation*}
\left( \Gamma \left( F,\nu ,N\right) ,\left[ ,\right] _{F,h},\Gamma \left(
\theta ,Id_{N}\right) \right),
\end{equation*}%
is a prealgebra morphism. Therefore, the triple
\begin{equation*}
\left( \left( \Gamma \left( F,\nu ,N\right) ,\left[ ,\right] _{F,h}\right)
,\left( \mathcal{F}(N),\cdot \right) ,\Gamma \left( \theta ,Id_{N}\right),
\right)
\end{equation*}%
is a Lie-Rinehart algebra over $\mathbb{R}$ and so, the triple
\begin{equation*}
\left( \left( F,\nu ,N\right) ,\left[ ,\right] _{F,h},\left( \theta
,Id_{N}\right) \right),
\end{equation*}%
is a Lie algebroid.
\end{proof}

\section{Why the Proof of Theorem 3.1 in \protect\cite{19} Does Not Work?}

\ \ \ \ \ \ \ This section is devoted to detail that Theorem 3.1 in \cite{19}
is based on a completely false assumption and so is not valid. Indeed,
component-wise composition of vector bundle morphism has no sense as a new
vector bundle morphism; and this is their false assumption.

In the theory of vector bundles, it is very famous that if $\left( \varphi
,\varphi _{0}\right) $ and $\left( \psi ,\psi _{0}\right) $ are two vector
bundles morphisms given by the commutative diagrams%
\begin{equation*}
\begin{array}{ccccc}
~\ \ F & ^{\underrightarrow{~\ \ \varphi ~\ \ }} & ~\ \ E & ^{%
\underrightarrow{~\ \ \psi ~\ \ }} & ~\ \ G \\ 
\nu \downarrow & ~\  & \pi \downarrow & ~\  & \tau \downarrow \\ 
~\ \ N & ^{\underrightarrow{~\ \ \varphi _{0}~\ \ }} & ~\ \ M & ^{%
\underrightarrow{~\ \ \psi _{0}~\ \ }} & ~\ \ P \\ 
~\ \left( t_{\alpha }\right) &  & ~\ \left( s_{a}\right) &  & ~\ \left( \xi
_{i}\right)%
\end{array}%
,
\end{equation*}%
such that $\varphi _{0}\in Diff\left( N,M\right) $ and $\psi _{0}\in
Diff\left( M,P\right) ,$ then we can discuss only about the composition
vector bundles morphism $\left( \psi ,\psi _{0}\right) \circ \left( \varphi
,\varphi _{0}\right) $ such that 
\begin{eqnarray*}
\Gamma \left( \left( \psi ,\psi _{0}\right) \circ \left( \varphi ,\varphi
_{0}\right) \right) \left( z^{\alpha }\cdot t_{\alpha }\right) &=&\Gamma
\left( \psi ,\psi _{0}\right) \circ \Gamma \left( \varphi ,\varphi
_{0}\right) \left( z^{\alpha }\cdot t_{\alpha }\right) \\
&=&\Gamma \left( \psi ,\psi _{0}\right) \left( \left( z^{\alpha }\cdot
\varphi _{\alpha }^{a}\right) \circ \varphi _{0}^{-1}\cdot s_{a}\right) \\
&=&\left( \left( \left( z^{a}\cdot \varphi _{\alpha }^{a}\right) \circ
\varphi _{0}^{-1}\right) \cdot \psi _{a}^{i}\right) \circ \psi
_{0}^{-1}\cdot \xi _{i}.
\end{eqnarray*}%
Indeed, it is trivial that the pair $\left( \psi \circ \varphi ,\psi
_{0}\circ \varphi _{0}\right) $ can not be regarded as a vector bundles
morphism. %\end{remark}

So, we can establish the following for generalized Lie algebroids as special
kinds of data structures dealing with the vector bundles.

\begin{corollary}
If $\left( (F,\nu ,N),[,]_{F,h},\left( \rho ,\eta \right) \right) $ is a
generalized Lie algebroid, then we can not discuss about the vector bundles
morphism $\Phi \overset{put}{=}Th\circ \rho $ covering $\phi \overset{put}{=}%
h\circ \eta .$
\end{corollary}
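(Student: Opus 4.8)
The plan is to obtain this corollary as the specialization, to $\left( \varphi ,\varphi _{0}\right) =\left( \rho ,\eta \right) $ and $\left( \psi ,\psi _{0}\right) =\left( Th,h\right) $, of the general principle established just above in this section: for vector bundles morphisms $\left( \varphi ,\varphi _{0}\right) $ and $\left( \psi ,\psi _{0}\right) $ with $\varphi _{0}\in Diff\left( N,M\right) $ and $\psi _{0}\in Diff\left( M,P\right) $, the only object one may legitimately form is the composition vector bundles morphism $\left( \psi ,\psi _{0}\right) \circ \left( \varphi ,\varphi _{0}\right) $, while the component-wise pair $\left( \psi \circ \varphi ,\psi _{0}\circ \varphi _{0}\right) $ is \emph{not} a vector bundles morphism. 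In our situation $\psi \circ \varphi =Th\circ \rho =\Phi $ and $\psi _{0}\circ \varphi _{0}=h\circ \eta =\phi $, so the assertion of the corollary is precisely that $\left( \Phi ,\phi \right) =\left( Th\circ \rho ,h\circ \eta \right) $ cannot be regarded as a vector bundles morphism; only $\left( Th,h\right) \circ \left( \rho ,\eta \right) $ can.

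First I would spell out, using the local representation of a vector bundles morphism covering a diffeomorphism (the Proposition above), the modules morphism attached to the composition, $\Gamma \left( \left( Th,h\right) \circ \left( \rho ,\eta \right) \right) =\Gamma \left( Th,h\right) \circ \Gamma \left( \rho ,\eta \right) $: its value on $z^{\alpha }\cdot t_{\alpha }$ is obtained by first reparametrizing a coefficient by $\eta ^{-1}$ and then reparametrizing the result by $h^{-1}$, each inverse acting on a coefficient already composed with the previous one. By the same Proposition, a \emph{genuine} vector bundles morphism $\left( \Phi ,\phi \right) $ covering the diffeomorphism $\phi =h\circ \eta $ would instead involve a single reparametrization by $\phi ^{-1}=\eta ^{-1}\circ h^{-1}$ applied to the whole coefficient $z^{\alpha }\Phi _{\alpha }^{\tilde{\imath}}$; these two prescriptions do not match, which already shows the pair $\left( \Phi ,\phi \right) $ is not the datum of a vector bundles morphism. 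The cleanest confirmation, however, is to invoke the computation carried out in the proof of the Proposition that a generalized Lie algebroid is a distinguished example by Lie algebroid: the operator written $\Gamma \left( Th\circ \rho ,h\circ \eta \right) $ in the definition of the generalized Lie algebroid, namely $u^{\alpha }t_{\alpha }\longmapsto u^{\alpha }\rho _{\alpha }^{i}\cdot \left( \frac{\partial \left( f\circ h\right) }{\partial x^{i}}\circ h^{-1}\right) $, contains $h^{-1}$ but no $\eta ^{-1}$, and it was shown there to coincide with $\Gamma \left( \theta ,Id_{N}\right) $ for the honest vector bundles morphism $\left( \theta ,Id_{N}\right) $ from $\left( F,\nu ,N\right) $ to $\left( TN,\tau _{N},N\right) $ with $\theta _{\alpha }^{\tilde{\imath}}=\rho _{\alpha }^{i}\cdot \left( \frac{\partial h^{\tilde{\imath}}}{\partial x^{i}}\circ h^{-1}\right) $. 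Thus the only vector bundles morphism hiding behind the symbol $\Gamma \left( Th\circ \rho ,h\circ \eta \right) $ covers $Id_{N}$, not $\phi =h\circ \eta $; since $h$ and $\eta $ are arbitrary diffeomorphisms, $\phi \neq Id_{N}$ in general, and a vector bundles morphism covering $Id_{N}$ cannot simultaneously be regarded as one covering $\phi $. Hence $\text{``}\Phi =Th\circ \rho \text{ covering }\phi =h\circ \eta \text{''}$ is not a vector bundles morphism, which is the claim.

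The step I expect to require the most care is reconciling the naive set-theoretic picture — in which $Th\circ \rho $ is honestly fibre-wise linear and the square with $\nu $, $\tau _{N}$ and $\phi $ commutes — with the module-theoretic notion of vector bundles morphism used throughout the paper. One must make explicit that, in the sense of Definition~\ref{Arcus1}, ``being a vector bundles morphism'' is the statement that the induced map on sections is a morphism in $\mathbf{Mod}$ for the $\odot$-action dictated by the base map, and that it is exactly this extra data which forces the reparametrization to be by $\phi ^{-1}$ as a single block rather than by $h^{-1}$ and $\eta ^{-1}$ in succession. Once this is isolated, the remainder is the bookkeeping already performed earlier in Section 4 and in the proof of the ``distinguished example'' Proposition, and the corollary follows at once.
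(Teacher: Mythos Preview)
Your proposal is correct and takes essentially the same approach as the paper: the corollary is stated immediately after the general principle that the component-wise pair $\left( \psi \circ \varphi ,\psi _{0}\circ \varphi _{0}\right) $ is not a vector bundles morphism, and the paper treats it as the direct specialization to $\left( \varphi ,\varphi _{0}\right) =\left( \rho ,\eta \right) $ and $\left( \psi ,\psi _{0}\right) =\left( Th,h\right) $, with no further argument. Your first paragraph is exactly this; the additional material you supply from the ``distinguished example'' Proposition and the local-representation Proposition is consistent with the paper but goes well beyond the one-line justification the paper actually gives.
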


\begin{remark}
If $\left( \varphi ,\varphi _{0}\right) $ and $\left( \psi ,\varphi
_{0}^{-1}\right) $ are two vector bundles morphisms given by the commutative
diagrams:%
\begin{equation*}
\begin{array}{ccccc}
~\ \ F & ^{\underrightarrow{~\ \ \varphi ~\ \ }} & ~\ \ E & ^{%
\underrightarrow{~\ \ \psi ~\ \ }} & ~\ \ G \\ 
\nu \downarrow & ~\  & \pi \downarrow & ~\  & \tau \downarrow \\ 
~\ \ N & ^{\underrightarrow{~\ \ \varphi _{0}~\ \ }} & ~\ \ M & ^{%
\underrightarrow{~\ \ \varphi _{0}^{-1}~\ \ }} & ~\ \ N \\ 
~\ \left( t_{\alpha }\right) &  & ~\ \left( s_{a}\right) &  & ~\ \left( \xi
_{i}\right)%
\end{array}%
,
\end{equation*}%
then the pair $\left( \psi \circ \varphi ,Id_{N}\right) \overset{put}{=}%
\left( \psi \circ \varphi ,\varphi _{0}^{-1}\circ \varphi _{0}\right) $ can
not be regarded as a vector bundles morphism from $\left( F,\nu ,N\right) $
to $\left( G,\tau ,N\right) .$ We can discuss only about the composition
vector bundles morphism $\left( \psi ,\varphi _{0}^{-1}\right) \circ \left(
\varphi ,\varphi _{0}\right) .$
\end{remark}

Finally, by notations used in \cite{19} and gathering above discussions, we
derive the following statement.

\begin{corollary}
If $\left( (F,\nu ,N),[,]_{F,h},\left( \rho ,\eta \right) \right) $ is a
generalized Lie algebroid, then we can not discuss about the vector bundles
morphism $\left( T\phi \right) ^{-1}\circ \Phi $ covering $Id_{N}=\phi
^{-1}\circ \phi .$

So, the affirmation \textit{"... then }$\left[ ,\right] _{\Phi }$\textit{\
is a Lie algebroid bracket on }$F$\textit{\ with respect to a
\textquotedblleft traditional\textquotedblright\ anchor map }$\Psi
:F\longrightarrow TN$\textit{\ covering the identity, }$\Psi =\left( T\phi
\right) ^{-1}\circ \Phi $\textit{."}\ is false.

Therefore, the proof of Theorem 3.1 from \cite{19} is based on a
misconception and is not legal. It breaks Theorem 1.3 down as the only
statement in \cite{19}.
\end{corollary}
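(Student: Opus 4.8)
The plan is to derive the statement directly from the Corollary just proved (that for a generalized Lie algebroid one cannot speak of the vector bundles morphism $\Phi=Th\circ\rho$ covering $\phi=h\circ\eta$) together with the preceding Remark on the compositions $(\psi,\varphi_0^{-1})\circ(\varphi,\varphi_0)$; since everything is already in place, the argument is short. First I would invoke that Corollary: for a generalized Lie algebroid $((F,\nu,N),[,]_{F,h},(\rho,\eta))$ the symbol $\Phi=Th\circ\rho$ ``covering'' $\phi=h\circ\eta$ is not a vector bundles morphism in the sense of Definition~\ref{Arcus1}; the data $(\rho,\eta)$ and $(Th,h)$ produce only the \emph{composition} vector bundles morphism $(Th,h)\circ(\rho,\eta)$, while the pair $(Th\circ\rho,\,h\circ\eta)$ is not a legitimate object. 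Consequently any expression that treats $\Phi$ as an already-given vector bundles morphism and post-composes it — in particular $(T\phi)^{-1}\circ\Phi$ — inherits this defect, and one cannot discuss it as a vector bundles morphism at all.

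Next I would reinforce the point intrinsically, granting $\Phi$ the most charitable reading possible. Accept, for the sake of argument, the legitimate composition vector bundles morphism $(Th,h)\circ(\rho,\eta)$ from $(F,\nu,N)$ to $(TN,\tau_N,N)$ over the diffeomorphism $\phi=h\circ\eta\in Diff(N,N)$. Then $T(\phi^{-1})=(T\phi)^{-1}$ covers $\phi^{-1}\in Diff(N,N)$, which is exactly the inverse of $\phi$, so we are precisely in the configuration of the preceding Remark, taking $\varphi_0:=\phi$, $\varphi:=(Th,h)\circ(\rho,\eta)$, $\psi:=(T\phi)^{-1}$ and $\varphi_0^{-1}:=\phi^{-1}$. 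The Remark then asserts that the pair $\bigl((T\phi)^{-1}\circ\Phi,\ Id_N\bigr)=\bigl((T\phi)^{-1}\circ\Phi,\ \phi^{-1}\circ\phi\bigr)$ cannot be regarded as a vector bundles morphism from $(F,\nu,N)$ to $(TN,\tau_N,N)$; the only object that makes sense is the threefold categorical composition $\bigl(T(\phi^{-1}),\phi^{-1}\bigr)\circ(Th,h)\circ(\rho,\eta)$, which is by no means a ``traditional'' Lie algebroid anchor covering the identity.

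Finally I would draw the consequence for \cite{19}. The proof of Theorem~3.1 of \cite{19} proceeds by declaring $\Psi=(T\phi)^{-1}\circ\Phi\colon F\to TN$ to be an anchor map covering $Id_N$ and transporting the bracket along it; by the two steps above this $\Psi$ is not a vector bundles morphism in the required sense, so the quoted affirmation is false, the argument of Theorem~3.1 has no content, and with it collapses its introductory restatement, Theorem~1.3 — the only assertion of \cite{19}. To pre-empt the reverse objection, I would add that a genuine Lie algebroid structure on $(F,\nu,N)$ does nonetheless exist, but it is exactly the one obtained honestly in the Proposition that a generalized Lie algebroid is a distinguished example by Lie algebroid, via the anchor $\theta$ with $\theta_\alpha^{\tilde\imath}=\rho_\alpha^i\cdot\bigl(\frac{\partial h^{\tilde\imath}}{\partial x^i}\circ h^{-1}\bigr)$ covering $Id_N$, and not via the ill-defined $\Psi$. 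The only point needing care is making the appeal to the Remark airtight, i.e. checking that its hypotheses hold — that $\phi$ is a diffeomorphism of $N$ and that $(T\phi)^{-1}$ genuinely covers $\phi^{-1}$ — which is immediate since $h$ and $\eta$ are diffeomorphisms; beyond that there is no real obstacle, the statement being a corollary.
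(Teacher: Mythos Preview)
Your proposal is correct and follows exactly the route the paper takes: the paper offers no explicit proof for this corollary, introducing it simply with ``Finally, by notations used in \cite{19} and gathering above discussions, we derive the following statement,'' i.e.\ it is meant to be read off immediately from the preceding Corollary (that $\Phi=Th\circ\rho$ covering $\phi=h\circ\eta$ is not a vector bundles morphism) together with the Remark on pairs $(\psi\circ\varphi,\varphi_0^{-1}\circ\varphi_0)$. Your write-up makes those two invocations explicit and even adds the contrast with the honest anchor $\theta$ from the earlier Proposition, which the paper does not mention here but which is perfectly consistent with its line of argument.
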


\section{Optimal control problem}

\ \ \ \ \ \ \ \ \ 

Here, we detalied the optimal control problem presented in the introduction.
The result shall appoint the importance of generalized Lie algebroids in
real occasions.

If, for any $x\in \Sigma ,$ we consider $(\tilde{x}^{1},\tilde{x}^{2},\tilde{%
x}^{3})=\varphi _{\Sigma }\circ s_{O}(x),$ then the equations system (\ref%
{OP}) is equivalent to the following%
\begin{equation*}
\begin{array}{c}
\frac{d\tilde{x}^{1}}{dt}=-\tilde{x}^{2}y^{2}-y^{3}, \\ 
\frac{d\tilde{x}^{2}}{dt}=-\tilde{x}^{1}y^{1}-\tilde{x}^{2}y^{2}-y^{3}, \\ 
\frac{d\tilde{x}^{3}}{dt}=-y^{1}.%
\end{array}%
\end{equation*}

As 
\begin{equation*}
\left( 
\begin{array}{c}
\frac{d\tilde{x}^{1}}{dt} \\ 
\frac{d\tilde{x}^{2}}{dt} \\ 
\frac{d\tilde{x}^{3}}{dt}%
\end{array}%
\right) =\left( 
\begin{array}{ccc}
0 & -\tilde{x}^{2} & -1 \\ 
-\tilde{x}^{1} & -\tilde{x}^{2} & -1 \\ 
-1 & 0 & 0%
\end{array}%
\right) \cdot \left( 
\begin{array}{c}
y^{1} \\ 
y^{2} \\ 
y^{3}%
\end{array}%
\right) ,
\end{equation*}%
and 
\begin{equation*}
\left( 
\begin{array}{ccc}
0 & -\tilde{x}^{2} & -1 \\ 
-\tilde{x}^{1} & -\tilde{x}^{2} & -1 \\ 
-1 & 0 & 0%
\end{array}%
\right) =\left( 
\begin{array}{cc}
\tilde{x}^{1} & -1 \\ 
0 & -1 \\ 
-1 & 0%
\end{array}%
\right) \cdot \left( 
\begin{array}{ccc}
1 & 0 & 0 \\ 
\tilde{x}^{1} & \tilde{x}^{2} & 1%
\end{array}%
\right) ,
\end{equation*}%
it results that we can consider the vector subbundle $(F,\tau _{\Sigma
},\Sigma )$ of the vector bundle $(T\Sigma ,\tau _{\Sigma },\Sigma )$ such
that $\Gamma (F,\tau _{\Sigma },\Sigma )=Span(t_{1},t_{2}),$ where 
\begin{equation*}
t_{1}=\frac{\partial }{\partial \tilde{x}^{1}},\ \ \ t_{2}=\tilde{x}^{1}%
\frac{\partial }{\partial \tilde{x}^{1}}+\tilde{x}^{2}\frac{\partial }{%
\partial \tilde{x}^{2}}+\frac{\partial }{\partial \tilde{x}^{3}}.
\end{equation*}

Denoting by $(\rho ,Id_{\Sigma })$ the vector bundles morphism from $(F,\tau
_{\Sigma },\Sigma )$ to $(T\Sigma ,\tau _{\Sigma },\Sigma )$ given by 
\begin{equation*}
\Gamma (\rho ,Id_{\Sigma })\left( 
\begin{array}{c}
t_{1} \\ 
t_{2}%
\end{array}%
\right) =\left( 
\begin{array}{ccc}
1 & 0 & 0 \\ 
\tilde{x}^{1} & \tilde{x}^{2} & 1%
\end{array}%
\right) \left( 
\begin{array}{c}
\frac{\partial }{\partial \tilde{x}^{1}} \\ 
\frac{\partial }{\partial \tilde{x}^{2}} \\ 
\frac{\partial }{\partial \tilde{x}^{3}}%
\end{array}%
\right) ,
\end{equation*}%
we obtain the Lie algebroid $((F,\tau _{\Sigma },\Sigma ),\left[ ,\right]
_{T\Sigma },(\rho ,Id_{\Sigma }))$. Now, we denote by $(g,s_{O})$ the vector
bundles morphism from $(T\Sigma ,\tau _{\Sigma },\Sigma )$ to $(F,\tau
_{\Sigma },\Sigma )$ given by 
\begin{equation*}
\Gamma (g,s_{O})\left( 
\begin{array}{c}
\frac{\partial }{\partial \tilde{x}^{1}} \\ 
\frac{\partial }{\partial \tilde{x}^{2}} \\ 
\frac{\partial }{\partial \tilde{x}^{3}}%
\end{array}%
\right) =\left( 
\begin{array}{cc}
\tilde{x}^{1} & -1 \\ 
0 & -1 \\ 
-1 & 0%
\end{array}%
\right) \left( 
\begin{array}{c}
t_{1} \\ 
t_{2}%
\end{array}%
\right) ,
\end{equation*}%
and we consider the vector bundles morphism $(Ts_{O},s_{O})$ from $(T\Sigma
,\tau _{\Sigma },\Sigma )$ to $(T\Sigma ,\tau _{\Sigma },\Sigma )$ given by 
\begin{equation*}
\begin{array}{c}
\Gamma (Ts_{O},s_{O})\left( 
\begin{array}{c}
\frac{\partial }{\partial x^{1}} \\ 
\frac{\partial }{\partial x^{2}} \\ 
\frac{\partial }{\partial x^{3}}%
\end{array}%
\right) =\left( 
\begin{array}{ccc}
-1 & 0 & 0 \\ 
0 & -1 & 0 \\ 
0 & 0 & -1%
\end{array}%
\right) \left( 
\begin{array}{c}
\frac{\partial }{\partial \tilde{x}^{1}} \\ 
\frac{\partial }{\partial \tilde{x}^{2}} \\ 
\frac{\partial }{\partial \tilde{x}^{3}}%
\end{array}%
\right) ,%
\end{array}%
\end{equation*}%
where $(\frac{\partial }{\partial \tilde{x}^{1}},\frac{\partial }{\partial 
\tilde{x}^{2}},\frac{\partial }{\partial \tilde{x}^{3}})$ is the natural
base. Since 
\begin{equation*}
\left( 
\begin{array}{cc}
\tilde{x}^{1} & -1 \\ 
0 & -1 \\ 
-1 & 0%
\end{array}%
\right) =\left( 
\begin{array}{ccc}
-1 & 0 & 0 \\ 
0 & -1 & 0 \\ 
0 & 0 & -1%
\end{array}%
\right) \cdot \left( 
\begin{array}{cc}
-\tilde{x}^{1} & 1 \\ 
0 & 1 \\ 
1 & 0%
\end{array}%
\right) ,
\end{equation*}%
then we deduce 
\begin{equation*}
\Gamma (g,s_{O})=\Gamma (R,Id_{\Sigma })\circ \Gamma (Ts_{O},s_{O}),
\end{equation*}%
where $(R,Id_{\Sigma })$ is a vector bundles morphism from $(T\Sigma ,\tau
_{\Sigma },\Sigma )$ to $(F,\tau _{\Sigma },\Sigma )$ given by 
\begin{equation*}
\Gamma (R,Id_{\Sigma })\left( 
\begin{array}{c}
\frac{\partial }{\partial \tilde{x}^{1}} \\ 
\frac{\partial }{\partial \tilde{x}^{2}} \\ 
\frac{\partial }{\partial \tilde{x}^{3}}%
\end{array}%
\right) =\left( 
\begin{array}{cc}
-\tilde{x}^{1} & 1 \\ 
0 & 1 \\ 
1 & 0%
\end{array}%
\right) \left( 
\begin{array}{c}
t_{1} \\ 
t_{2}%
\end{array}%
\right) .
\end{equation*}

If we denote by $\mathcal{R}$ the matrix 
\begin{equation*}
\left( 
\begin{array}{cc}
-\tilde{x}^{1} & 1 \\ 
0 & 1 \\ 
1 & 0%
\end{array}%
\right) ,
\end{equation*}%
then 
\begin{equation*}
\mathcal{R}^{t}=\left( 
\begin{array}{ccc}
-\tilde{x}^{1} & 0 & 1 \\ 
1 & 1 & 0%
\end{array}%
\right) ,~\mathcal{R}^{t}\circ \mathcal{R}=\left( 
\begin{array}{cc}
1+(\tilde{x}^{1})^{2} & -\tilde{x}_{1} \\ 
-x_{1} & 2%
\end{array}%
\right) \text{and }\det (\mathcal{R}^{t}\circ \mathcal{R})=2+(\tilde{x}%
^{1})^{2}\neq 0.
\end{equation*}

Easily, we obtain 
\begin{equation*}
(\mathcal{R}^{t}\circ \mathcal{R})^{-1}=\frac{1}{2+(\tilde{x}^{1})^{2}}%
\left( 
\begin{array}{cc}
2 & \tilde{x}_{1} \\ 
\tilde{x}_{1} & 1+(\tilde{x}^{1})^{2}%
\end{array}%
\right) ,
\end{equation*}%
and so 
\begin{equation*}
\mathcal{R}_{left}^{-1}\overset{put}{=}(\mathcal{R}^{t}\circ \mathcal{R}%
)^{-1}\circ \mathcal{R}^{t}=\frac{1}{2+(\tilde{x}^{1})^{2}}\left( 
\begin{array}{ccc}
-\tilde{x}^{1} & \tilde{x}^{1} & 2 \\ 
1 & 1+(\tilde{x}^{1})^{2} & \tilde{x}^{1}%
\end{array}%
\right) ,
\end{equation*}%
is the left inverse of the matrix $\mathcal{R}$, because $\mathcal{R}%
_{left}^{-1}\cdot \mathcal{R}=I_{2}$. Thus we obtain the left inverse vector
bundle morphism $(R_{left}^{-1},Id_{\Sigma })$ from $(F,\tau _{\Sigma
},\Sigma )$ to $(T\Sigma ,\tau _{\Sigma },\Sigma )$\ given by 
\begin{equation*}
\Gamma (R_{left}^{-1},Id_{\Sigma })\left( 
\begin{array}{c}
t_{1} \\ 
t_{2}%
\end{array}%
\right) =\frac{1}{2+(\tilde{x}^{1})^{2}}\left( 
\begin{array}{ccc}
-\tilde{x}^{1} & \tilde{x}^{1} & 2 \\ 
1 & 1+(\tilde{x}^{1})^{2} & \tilde{x}^{1}%
\end{array}%
\right) \cdot \left( 
\begin{array}{c}
\frac{\partial }{\partial \tilde{x}^{1}} \\ 
\frac{\partial }{\partial \tilde{x}^{2}} \\ 
\frac{\partial }{\partial \tilde{x}^{3}}%
\end{array}%
\right) .
\end{equation*}

If $\tilde{g}=Ts_{O}^{-1}\circ R_{left}^{-1},$ then $(\tilde{g},s_{O}^{-1})$
is a vector bundles morphism from $(F,\tau _{\Sigma },\Sigma )$ to $(T\Sigma
,\tau _{\Sigma },\Sigma )$ and 
\begin{equation*}
\Gamma (\tilde{g},s_{O}^{-1})\left( 
\begin{array}{c}
t_{1} \\ 
t_{2}%
\end{array}%
\right) =\left( 
\begin{array}{ccc}
\frac{x^{1}}{2+(x^{1})^{2}} & \frac{-x^{1}}{2+(x^{1})^{2}} & \frac{-2}{%
2+(x^{1})^{2}} \\ 
\frac{-1}{2+(x^{1})^{2}} & \frac{-1-(x^{1})^{2}}{2+(x^{1})^{2}} & \frac{%
-x^{1}}{2+(x^{1})^{2}}%
\end{array}%
\right) \left( 
\begin{array}{c}
\frac{\partial }{\partial x^{1}} \\ 
\frac{\partial }{\partial x^{2}} \\ 
\frac{\partial }{\partial x^{3}}%
\end{array}%
\right) .
\end{equation*}

As $\tilde{g}_{\alpha }^{a}\cdot g_{b}^{\alpha }=\delta _{b}^{a},$ it
results that the vector bundles morphism $(g,s_{O})$ is left invertible and $%
(\tilde{g},s_{O}^{-1})$ is its left inverse. So, we pass the diagram 
\begin{equation*}
\begin{array}{ccccccccc}
&  & T\Sigma  & ^{\underrightarrow{~\ \ g~\ \ }} & (F,[,]_{F,s_{O}}) & ^{%
\underrightarrow{~\ \ \rho ~\ \ }} & T\Sigma  & ^{\underrightarrow{~\ \
Ts_{O}~\ \ }} & T\Sigma  \\ 
& \dot{c}\nearrow  & ~\ \downarrow \tau _{\Sigma } &  & \downarrow \tau
_{\Sigma } &  & ~\ \ \downarrow \tau _{\Sigma } &  & ~\ \ \downarrow \tau
_{\Sigma } \\ 
I & ^{\underrightarrow{~\ \ c~\ \ }} & \Sigma  & ^{\underrightarrow{~\ \
s_{O}~\ \ }} & \Sigma  & ^{\underrightarrow{~\ \ Id_{\Sigma }~\ \ }} & 
\Sigma  & ^{\underrightarrow{~\ \ s_{O}~\ \ }} & \Sigma 
\end{array}%
,
\end{equation*}%
where the vector bundle $\left( T\Sigma ,\tau _{\Sigma },\Sigma \right) $ is
anchored by the generalized Lie algebroid $\left( (F,\nu
,N),[,]_{F,s_{O}},\left( \rho ,Id_{\Sigma }\right) \right) $ with the help
of a left invertible vector bundles morphism $\left( g,s_{O}\right) .$

\bigskip 

In the end of this paper, we ask:

\begin{quotation}
\textbf{- }Can we develop a Lagrangian formalism directly on a vector bundle 
$\left( E,\pi ,M\right) $ anchored by a (generalized) Lie algebroid $\left(
(F,\nu ,N),[,]_{F,h},\left( \rho ,\eta \right) \right) $ with the help of a
left invertible vector bundles morphism $\left( g,h\right) $ similar to
Klein's formalism for ordinary Lagrangian Mechanics?
\end{quotation}

We suppose that it is possible, but it is necessary to extend the notion of
pullback vector bundle and, using it, we can obtain a new version of the Lie
algebroid generalized tangent bundle or of the prolongation Lie algebroid.
Also, we suppose that, in particular situations, this space was used in all
our papers including theory of connections \cite{2}, mechanics and optimal
control \cite{3}, Kaluza-Klein $G$-spaces \cite{4}, Weil's theory \cite{5},
vertical and complete lifts \cite{6}.

\textbf{Acknowledgment}

\addcontentsline{toc}{section}{Acknowledgment}

We like thank to R\u{a}dine\c{s}ti-Gorj Cultural Scientific Society for
financial support. We are deeply indebted to Professor Janusz Grabowski for
his essential contribution to the discussions about generalized (almost) Lie
algebras/algebroids. %%

%
%===========================
%
{\small \ }

\noindent
\begin{verbatim}
c_arcus@radinesti.ro, e-peyghan@araku.ac.ir, esasharahi@gmail.com
Secondary School "Cornelius Radu", Radinesti Village, 217196, Gorj
County, Romania,
Department of Mathematics, Faculty of Science, Arak
University, Arak 38156-8-8349, Iran.
\end{verbatim}

%==================

\end{document}